\newcommand{\be}{\begin{equation}}
\newcommand{\ee}{\end{equation}}
\newtheorem{Definition}{Definition}
\newtheorem{Theorem}{Theorem}
\newtheorem{Proposition}{Proposition}
\begin{document}
%%%%%%%%%%%%%%%%%%%%%%%%%%%%%%%%%%%%%%%%%%%%%%%%%%%%%%%%%%%%

\begin{center}

\textit{Computational and Applied Mathematics 43 (2024) 183}

\vskip 7mm

{\bf \large Parametric General Fractional Calculus: Nonlocal} 
\vskip 3mm
{\bf \large 
Operators Acting on Function with Respect to Another Function}
\vskip 3mm

\vskip 7mm
{\bf \large Vasily E. Tarasov}$^{1,2}$ \\
\vskip 3mm

${}^1$ {\it Skobeltsyn Institute of Nuclear Physics, \\ 
Lomonosov Moscow State University, Moscow 119991, Russia} \\
{E-mail: tarasov@theory.sinp.msu.ru} \\

${}^2$ {\it
Department of Physics, 915, \\ 
Moscow Aviation Institute (National Research University),
Moscow 125993, Russia } \\

\begin{abstract} 
In this paper, we present the definitions and some properties of the general fractional integrals (GFIs) and 
general fractional derivatives (GFDs) of a function f(x) with respect to another function g(x).
Examples of special cases of parametric GF operators are described.
Some properties of the parametric GFIs and the parametric GFDs are proved.
Representation of the parametric GFIs and parametric GFDs via the GF integrals on the finite interval (a,b).
Fundamental theorems of parametric GFC are proved.
Applied mathematical interpretations of parametric general fractional derivatives for processes with memory are given.
\end{abstract}

\end{center}

Keywords: fractional calculus, general fractional calculus,
fractional derivatives, fractional integrals, fractional dynamics, processes with memory, nonlocality

PACS: 45.10.Hj 

MSC: 26A33
%%% 26A33: Fractional derivatives and integrals

%%%%%%%%%%%%%%%%%%%%%%%%%%%%%%%%%%%%%%%%%%%%%%%%%%%%%%%%%%%%%
%%%%%%%%%%%%%%%%%%%%%%%%%%%%%%%%%%%%%%%%%%%%%%%%%%%%%%%%%%%%%
%%%%%%%%%%%%%%%%%%%%%%%%%%%%%%%%%%%%%%%%%%%%%%%%%%%%%%%%%%%%%

\newpage

%%%%%%%%%%%%%%%%%%%%%%%%%%%%%%%%%%%%%%%%%%%%%%%%%%%%%%%%%%%%%

\section{Introduction}

Fractional calculus is a calculus of differential and integral operators of non-integer order \cite{SKM} - \cite{Handbook2019-2} 
that allow us to describe wide class of nonlocal systems and processes \cite{Handbook2019-4} - \cite{BOOK-DG-2021}.
If these operators satisfy generalizations of the fundamental theorems of standard calculus, then these operators are called fractional integrals (FIs) and fractional derivatives (FDs).

%%%\cite{SKM,Kiryakova,Podlubny,KST,Diethelm,Handbook2019-1,Handbook2019-2}
%%% \cite{Handbook2019-4,Handbook2019-5,TarasovSpringer,KLM,Mainardi,US2013,APSS2014a,APSS2014b,Povstenko,US2018,BOOK-MDPI-2020,BOOK-DG-2021}). 

To describe a wider class of systems and processes, we should consider operator with kernels belonging to a wide class of functions. 
However, this approach can lead to a mathematically inadequate description, if for such operators there are no generalizations of fundamental theorems of standard calculus.
This disadvantage can be eliminated by using general fractional calculus (GFC) \cite{Luchko2021-1} - \cite{Bazhlekova2021}. 
The GFC is based on the concepts of kernel pairs and operators, which were proposed by N.Ya. Sonin (1849-1915) in 1884 article \cite{Sonin-1,Sonin-2}. 
Note that the name "Sonin" \cite{Sonin-MathNet,Sonin-Wikipedia} is mistakenly used as "Sonine". 
The incorrect spelling of the Sonin surname in some English-language articles is due to the citation of an article in French-language journal \cite{Sonin-1}, where his surname was presented in the French-language form.
Note that the terms "general fractional calculus", "general fractional integrals" and "general fractional derivatives" were suggested by Kochubei in 2011 \cite{Kochubei2011}. 
For these operators the general fundamental theorems are proved in \cite{Kochubei2011,Hanyga,Luchko2021-1}. 

Following work \cite{Luchko2021-1}, one can conditionally distinguish three types of GFC, which are based on 
three different types of pairs of spaces (classes, sets) to which pairs of Sonin kernels belong: 
({\cal K}) the Kochubei's GFC proposed in 2011, \cite{Kochubei2011} (see also \cite{Kochubei2019-1,Kochubei2019-2});
({\cal H}) the Hanyga's GFC proposed in 2020, \cite{Hanyga};
({\cal L}) the Luchko's GFC proposed in 2021, \cite{Luchko2021-1}.

The GFC allows us to describe a wider type of non-localities in space and time. 
The most convenient form of GFC is the Luchko GFC \cite{Luchko2021-1} - \cite{Luchko2023}. 
The Luchko GFC is also developed and applied to various sciences in \cite{Mathematics2021-3} - \cite{Atanackovic2023-2}. 
Other forms of GFC is described and applied in \cite{Kochubei2011} - \cite{Bazhlekova2021}.
In 2023 paper \cite{Luchko2023}, the GFC on finite interval is proposed, 
The general fractional integrals (GFIs) and general fractional derivatives (GFDs) form the GFC, in which generalizations of the fundamental theorems of standard calculus are satisfied. 
The following basic properties of GFDs and GFIs on the finite interval $(a,b)$ that are proved in \cite{Luchko2023}: 
The GFIs satisfy the semi-group and commutativity properties;
Equations expressing GFDs of the Riemann-Liouville type in terms of GFDs of the Caputo type;
The fundamental theorems of GFC;
the GFDs satisfy rules of integration by parts;
the GFIs satisfy rules of integration by parts.

In this paper, we present the definitions and some properties of the GFIs and GFDs of a function $f(x)$ with respect to another function $g(x)$.
In standard calculus of integrals and derivatives of integer orders, the integrals and derivatives of a function $f(x)$ with respect to another function $g(x)$ is usually called the parametric integrals and derivatives.
Therefore the proposed generalization of the Luchko GFC can be also called as "Parametric GFC" for convenience instead of a long phrase "GFC of GF integrals and GF derivatives of a function with respect to another function". 
In this paper, to simplify the terminology, we proposed to use "parametric GF operator" instead of "general fractional operator of a function f(x) with respect to another function g(x)". This not only greatly simplifies the terminology, but also provides a connection with terms actively used in the standard calculus of derivatives and integrals of integer order.

%%%%%%%%%%%%%%%%%%%%%%%%%%%%%%%%%%%%%%%%%%%%%%%%%%%%%%%%%%%%%
%%%%%%%%%%%%%%%%%%%%%%%%%%%%%%%%%%%%%%%%%%%%%%%%%%%%%%%%%%%%%
%%%%%%%%%%%%%%%%%%%%%%%%%%%%%%%%%%%%%%%%%%%%%%%%%%%%%%%%%%%%%

In standard calculus, a parametric derivative can be considered as a derivative of a dependent variable $f(x)$ with respect to another dependent variable $g(x)$ that is taken when both variables depend 
on an independent third variable $x$.
The parametric derivative of integer order of a function $f(x)$ with respect to another function $g(x)$ is
\[
D^n_g \, f(x) \, = \, 
\left(\frac{1}{g^{(1)}(x)} \, \frac{d}{dx}\right)^n \, f(x) ,
\]
where $n \, \in \, \mathbb{N}$ and 
$g^{(1)}(x) \, = \, dg(x)/dx$ is the first-order derivative.
A derivation of generalized Cauchy formula for the case of parametric integration of integer order was proposed by Shelkovnikov \cite{Shelkovnikov1951} in 1951. 

In standard calculus of integrals and derivatives of integer orders, the parametric integral and derivatives are directly connected with the standard chain rule (for example, see Sec. 5.2.5 of \cite{Zorich}, p.204, and Sec. 5.7 of \cite{ISS1979}, p.238, Sec. 14.2 of \cite{ISS1979}, pp.669-670).
In the fractional calculus of integrals and derivatives of non-integer orders the generalization chain rules are violated 
\cite{CNSNS2016,Cresson2020,Math2019}.
The generalized equation of the chain rule for fractional derivatives (see equation 2.209 in Sec 2.7.3 of \cite{Podlubny}, pp.97-98) becomes very complicated so that it cannot be used in applications.
Therefore the importance of the parametric GFIs and GFDs is greatly increased.

%%%%%%%%%%%%%%%%%%%%%%%%%%%%%%%%%%%%%%%%%%%%%%%%%%%%%%%%%%%%%
%%%%%%%%%%%%%%%%%%%%%%%%%%%%%%%%%%%%%%%%%%%%%%%%%%%%%%%%%%%%%
%%%%%%%%%%%%%%%%%%%%%%%%%%%%%%%%%%%%%%%%%%%%%%%%%%%%%%%%%%%%%

Let us briefly describe some of the main works on parametric fractional calculus. To do this, we will conditionally divide these works by century (XIX, XX and XXI).

XIX) The idea about fractional integral of a function by another function in fact was suggested by Liouville in 1835 work \cite{Liouville1935}. 
The fractional integral of a function by another function was in fact proposed in 1865 by Holmgren \cite{Holmgren1865}, p.10. 

XX) The papers, where the notion of differentiation of a function by another one appeared again, are those of 
Erdelyi \cite{Erdelyi1964}, 
Talenti \cite{Talenti1965} and also 
Erdelyi \cite{Erdelyi1970}. 
Some simple properties of fractional integrals (FIs) of a function by another were considered by Chrysovergis \cite{Chrysovergis1971} in 1971. 
Note that such integrals in an implicit form are proposed by Sewell in Chapter 3 of 1937 work \cite{Sewell1937}, p.14, in the proof of the invariance property of fractional differentiability of function given on curves under conformal mappings. 
The parametric FIs and FDs of a function by another in the complex plane were investigated by Osler in \cite{Osler1970a,Osler1970b,Osler1972a, Osler1972b} in 1970 and 1972. 
In papers \cite{Osler1970a,Osler1972a}, generalization of the Leibniz role for parametric FDs are proposed.
In work \cite{Osler1970a,Osler1972a}, an integral form of the generalized Leibniz role for parametric FDs is considered.
In paper \cite{Osler1970b}, a generalization of the chain rule for the parametric FDs is proposed.
Fractional differentiation of a function by another in the Grunwald-Letnikov form was studied by Krasnov \cite{Krasnov1977}. 
Some historical comments on can see in Sec. 23.1 of \cite{SKM}, pp.431-432.

%%%%%%%%%%%%%%%%%%%%%%%%%%%%%%%%%%%%%%%%%%%%%%%%%%%%%%%%%%%%%

In usual fractional calculus, the parametric fractional integrals and derivatives, are described in classical books \cite{SKM,KST}.
The parametric Riemann-Liouville fractional derivative is described in Section 18.2 of book \cite{SKM}, pp.325-329, and Section 2.5 in \cite{KST}, pp.99-105.

%%%%%%%%%%%%%%%%%%%%%%%%%%%%%%%%%%%%%%%%%%%%%%%%%%%%%%%%%%%%%

XXI) It should be noted that in the first decade of the 21st century there was practically no interest with the parametric fractional derivative and integrals. 
In fact, the active development of calculus for parametric fractional operators sharply increased from the second half of the second decade of the 21st century.

XXI-1) Note that in 2012, an application of parametric FDs to change of variables and nonlocal symmetries of equations with FDs is considered by Gazizov, Kasatkin and Lukashchuk in \cite{Gazizov2012}.

%%%In the general case, an arbitrary change of variables does not preserve 
%%%the form of the fractional differential operator.
%%% In particular, this substitution transforms the Riemann-Liouville FDs 
%%% to the left-sided FDs of the function with respect to the function.

%%%%%%%%%%%%%%%%%%%%%%%%%%%%%%%%%%%%%%%%%%%%%%%%%%%%%%%%%%%%%

%%% Almeida and Malinowska 

XXI-2) Parametric Caputo fractional derivative 
was first proposed in equation 23 of Definition 3 in paper
\cite{Elasticity1}, p.224.
This work also describes the use of the parametric Caputo FDs
to generalize the elasticity of economic processes with memory. 
Then, the properties of this derivative were described in \cite{Almeida-1} in 2017.
In 2017-2021, results about the $\psi$-Caputo fractional derivatives and integrals are derived in papers 
\cite{Almeida-1,Almeida-2,Almeida-3,Almeida-4,Almeida-5,Almeida-6}. 
In 2018, Almeida, Malinowska and Monteiro \cite{Almeida-2} considered equations with the parametric Caputo FDs and their applications. 
In 2019, Almeida in \cite{Almeida-3} described some additional properties of the Osler's parametric FIs and FDs derivatives.
In the same year, Almeida, Jleli and Samet \cite{Almeida-4} gave numerical study of equations with $\psi$-Caputo FDs, which describe the fractional relaxation-oscillation.
In 2020, Almeida \cite{Almeida-5} considered equations involving the $\psi$-Caputo FDs.
Then, Almeida, Malinowska and Odzijewicz \cite{Almeida-6} studied systems of equations with the parametric Caputo FDs in 2021.

%%%%%%%%%%%%%%%%%%%%%%%%%%%%%%%%%%%%%%%%%%%%%%%%%%%%%%%%%%%%%

%%% Sousa and Oliveira 

XXI-3) In 2018-2020, the results about the parametric generalization of the Hilfer FDs and FIs are proposed by 
\cite{Sousa-1,Sousa-2,Sousa-3,Sousa-4,Sousa-5,Sousa-6,Sousa-7,Sousa-8}, where these operators are called $\psi$-Hilfer FIs and FDs.
In 2018, Sousa and Oliveira \cite{Sousa-1} proposed the $\psi$-Hilfer fractional derivative. 
Then they considered \cite{Sousa-2} the Ulam-Hyers-Rassias stability for nonlinear equations with the $\psi$-Hilfer fractional operators. 
In 2019, Sousa and Oliveira \cite{Sousa-3} proposed the $\psi$-fractional integrals. 
In the same year, Sousa and Oliveira \cite{Sousa-4,Sousa-5} described 
the Gronwall inequality, the Cauchy type problem and Leibniz type rule for the $\psi$-Hilfer fractional operators. 
The nonlinear equations with $\psi$-Hilfer fractional derivatives are considered in Kucche, Mali and Sousa \cite{Sousa-6} in 2019.
In 2020, Sousa, Gastao and Oliveira proposed so-called $\psi$-Hilfer pseudo-fractional operators.
In the same year, Sousa, Machado and Oliveira \cite{Sousa-8} proposed the $\psi$-Hilfer FC of variable order. 

%%%%%%%%%%%%%%%%%%%%%%%%%%%%%%%%%%%%%%%%%%%%%%%%%%%%%%%%%%%%%

%%% Fernandez, and Fahad 

XXI-4) In 2021, Oumarou, Fahad, Djida and Fernandez \cite{Fernandez-1} proposed the parametric FC with analytic kernels with respect to functions.
In the same year, Fahad, Fernandez, Rehman and Siddiqi \cite{Fernandez-2} described parametric generalization of tempered and Hadamard-type FC.
In 2021 work \cite{Fernandez-3}, Fernandez, Restrepo and Djida proposed the parametric fractional Laplacian of a function with respect to another function.
Then in 2021, Fahad and Fernandez \cite{Fernandez-4} developed
operational calculus for equations with the parametric Caputo FDs in 2021.
In 2022, Mali, Kucche, Fernandez, and Fahad \cite{Fernandez-5}
proposed parametric tempered FC and consider equations with 
the parametric tempered FDs.
In the same year, Kucche, Mali, Fernandez and Fahad \cite{Fernandez-6} consider the parametric tempered Hilfer FDs and the equations with such FDs.
In 2019-2023, Fahad, Rehman and Fernandez \cite{Fernandez-7} 
the Laplace transforms of 
the parametric fractional differential and integral operators of non-integer orders
and their applications to equations with parametric FDs.
Note that a formulation of operational calculus for 
the parametric differential and integral operators of integer orders has been proposed by Rapoport \cite{Rapoport} in 1970 
(see also \cite{Brychkov-1,Brychkov-2}). 
The operational calculus for the general fractional derivatives with the Sonin kernels has been suggested 
by Luchko \cite{Luchko2021-3} in 2021. 
Then, in 2022, Al-Kandari, Hanna, and Luchko \cite{Luchko2022-4} give the operational calculus 
for the general fractional derivatives of arbitrary order.
Then, in 2022, Al-Kandari, Hanna, and Luchko \cite{Luchko2022-4} give the operational calculus for the general fractional derivatives of arbitrary order. 
In 2023, Al-Refai and Fernandez 
\cite{Fernandez-2023} consider a generalization of the fractional calculus with Sonin kernels via conjugations in the form $D^{*} \, = \, S \, D \, S^{-1}$ with substitution operator $S$. 
In the same year, Fernandez \cite{Fernandez-2023-NEW} proposed
operational calculus for general conjugated fractional derivatives. 
It should be noted that the so-called "conjugated" fractional operators can be considered as more general than the parametric fractional operators. 
The operator $S$ can be considered as is a general invertible linear operator, whereas substitution operator $S=Q_g$ (see \cite{SKM}, p.326, and \cite{KST}, p.100) is the specific operator of composition with a monotonic function $g$. Therefore, the theory of "conjugated" operators also includes the left-sided and right-sided FDs and FIs, the weighted FDs and FIs, and some others in addition to the case parametric FDs and FIs.

%%%%%%%%%%%%%%%%%%%%%%%%%%%%%%%%%%%%%%%%%%%%%%%%%%%%%%%%%%%%%
%%%%%%%%%%%%%%%%%%%%%%%%%%%%%%%%%%%%%%%%%%%%%%%%%%%%%%%%%%%%%
%%%%%%%%%%%%%%%%%%%%%%%%%%%%%%%%%%%%%%%%%%%%%%%%%%%%%%%%%%%%%

XXI-5) In 2023 paper \cite{FF-2023}, a GFC of operators that is defined through the Mellin convolution instead of the Laplace convolutional operators of the Luchko GFC is proposed. 
The operators are generalizations of standard scaling operator for the case of general form of nonlocality. 
The usual Hadamard and Hadamard-type FIs and FDs are special case of the proposed Mellin convolutional GF operators. 
The Mellin convolutional GFIs and GFDs can be considered as parametric FIs and FDs of function $f(x)$ with respect to function $g(x) \, = \, \ln(x/a)$ with $x>a>0$.

In this paper, we proposed definitions of the parametric GFIs and GFDs of a function $f(x)$ with respect to another function $g(x)$. 
Some properties of the parametric GFIs and GFDs are proved.
The proposed results can be considered as a generalization of the Luchko results that are derived in \cite{Luchko2023}
The proposed parametric GFC can also be considered as generalization of the Mellin convolutional GFIs and GFDs 
\cite{FF-2023}, from the function $g(x) \, = \, \ln(x/a)$ to wider class of functions $g(x)$.
The first and second fundamental theorems of the parametric GFIs and GFDs are proved.
These proofs are based on the properties of the substitution operators and properties of GFIs and GFDs on the finite intervals $(a,b)$.
Therefore, the proved fundamental theorems allow us to state that the parametric GFIs and GFDs form a parametric GFC.

Let us note main differences of the proposed work and paper \cite{Fernandez-2023} is the consideration of a parametric GFC on finite intervals instead of GFC on infinite positive semiaxis. 
The suggested work can be considered as extension of the Al-Refai-Luchko paper \cite{Luchko2023}about general fractional calculus in finite intervals. 
The work \cite{Fernandez-2023} can be considered as a parametric extension of general fractional calculus proposed in the Luchko paper \cite{Luchko2021-1}. 
One can say that difference of the proposed manuscript and the Al-Refai-Fernandez paper \cite{Fernandez-2023} is similar to difference of between Al-Refai-Luchko paper \cite{Luchko2023} and Luchko paper \cite{Luchko2021-1} in the part of parametric general fractional operators.
Let us note the difference in terminology, which lies in the fact that parametric fractional operators are called the "conjugate" operators in the work \cite{Fernandez-2023}, which does not coincide with the terminology of the standard calculus of integrals and derivatives of integer order.

In Section 2, set of functions and set of kernel pairs are defined.
In Section 3, parametric GF integrals and parametric GF derivatives of $f(x)$ with respect to $g(x)$ are defined.
Examples of special cases of parametric GF operators are described.
Some properties of the parametric GFIs and the parametric GFDs are proved.
Representation of the parametric GFIs and parametric GFDs via the GF integrals on the finite interval $(a,b)$.
In Section 4, fundamental theorems of parametric GFC are proved.
In Section 5, economic interpretations of parametric GF derivatives are point out.

%%%%%%%%%%%%%%%%%%%%%%%%%%%%%%%%%%%%%%%%%%%%%%%%%%%%%%%%%%%%%
%%%%%%%%%%%%%%%%%%%%%%%%%%%%%%%%%%%%%%%%%%%%%%%%%%%%%%%%%%%%%
%%%%%%%%%%%%%%%%%%%%%%%%%%%%%%%%%%%%%%%%%%%%%%%%%%%%%%%%%%%%%

%%% \newpage

%%%%%%%%%%%%%%%%%%%%%%%%%%%%%%%%%%%%%%%%%%%%%%%%%%%%%%%%%%%%%

\section{Set of Functions and Set of Kernels}

%%%%%%%%%%%%%%%%%%%%%%%%%%%%%%%%%%%%%%%%%%%%%%%%%%%%%%%%%%%%%

\subsection{Set of functions of GFC}

In this paper, definitions and some propertied of parametric GFIs and GFDs are considered on the finite interval $(a,b)$ with $ - \, \infty \, < \, a \, < \, b \, < \, +\infty$ are considered. 

In this section, we define the set of functions ${\cal G}(\Omega)$, $C_{-1,g}(\Omega)$, $C^1_{-1,g}(\Omega)$, where $\Omega \, \subset \, \mathbb{R}$ is finite interval of the form $(a,b]$ or $[a,b)$. 

1) Let $g(x)$ be an increasing and positive monotone function on $(a, b)$, having a continuous first-order derivative 
on $(a,b]$ (or on $[a,b)$) such that $g^{(1)}(x) \, \ne \, 0$ for all $x \, \in \, [a,b]$.
Then the set of such functions will be denoted as ${\cal G}(a,b]$ (or as ${\cal G}[a,b)$).

2) Function $f(x)$ belongs to the set $C_{-1,g}(a,b]$, if it
can be represented as
\[
f(x) \, = \, (g(x) \, - \, g(a))^{p} \, m_p(x) ,
\]
where $m_p(x) \, \in \, C[a,b]$, and $p \, > \, - \, 1$.

Function $f(x)$ belongs to the set $C_{-1,g}(a,b)$, if it
can be represented as
\[
f(x) \, = \, (g(b) \, - \, g(x))^{q} \, m_q(x) ,
\]
where $m_q(x) \, \in \, C[a,b]$, and $q \, > \, - \, 1$.

3) The condition $f(x) \, \in \, C^1_{-1,g}(a,b]$ means that first-order derivatives of function $f(x)$ can be represented as
\[
\frac{d f(x)}{d x} 
\, = \, (g(x) \, - \, g(a))^{p} \, v_p(x) ,
\]
where $v_p(x) \, \in \, C[a,b]$, $p \, > \, - \, 1$. 

The condition $f(x) \, \in \, C^1_{-1,g}[a,b)$
means that first-order derivatives of function $f(x)$ can be represented as
\[
\frac{d f(x)}{d t} 
\, = \, (g(b) \, - \, g(x))^{q} \, v_q(x) ,
\]
where $v_q (x) \, \in \, C[a,b]$, $q \, > \, - \, 1$.

%%%%%%%%%%%%%%%%%%%%%%%%%%%%%%%%%%%%%%%%%%%%%%%%%%%%%%%%%%%%%
\subsection{Sonin and Luchko sets of kernel pairs}

%%%In \cite{Luchko-2023}, 

Let $(a,b)$ with 
$ - \, \infty \, < \, a \, < \, b \, < \, +\infty$ be a finite interval of the real line $\mathbb{R}$. 

The GFC is formulated for the Sonin set ${\cal S}_{1}(\mathbb{R}_{+})$ of kernel pairs $( M(x) , \, K(x))$.

\begin{Definition}
Let a kernel pair $( M(x) , \, K(x))$ with 
$x \, \in \, \mathbb{R}_{+} = (0,\infty)$ 
satisfy the Sonin condition
\begin{equation} \label{S-C}
\int^{x}_{0} M (x \, - \, z) \, 
 K (z) \, dz \, = \, \{1\} \, = \, 
\begin{cases}
1 & \mbox{if } x \in (0,b-a] \\
0 & \mbox{if } x \notin (0,b-a] ,
\end{cases}
\end{equation}
Then, such set of kernel pairs is called the Sonin set of kernel pairs.
\end{Definition}

The kernels $M(x)$ and $K(x)$ as functions can be considered to belong to different spaces or classes of functions. 
The types of general fractional calculus can be conditionally divided depending on additional conditions imposed on these functions \cite{Luchko2021-1}. 
Among the main types of calculus, the following can be distinguished: 
({\cal K}) the Kochubei's GFC \cite{Kochubei2011} (see also \cite{Kochubei2019-1,Kochubei2019-2});
({\cal H}) the Hanyga's GFC \cite{Hanyga};
({\cal L}) the Luchko's GFC \cite{Luchko2021-1}.

Let us give a definition of the definition of the Luchko set ${\cal L}_{1}(\mathbb{R}_{+})$ of kernel pairs. 

\begin{Definition}
Let a kernel pair $( M(x) , \, K(x))$ with 
$x \, \in \, \mathbb{R}_{+} = (0,\infty)$ 
satisfy the Sonin condition \eqref{S-C} and
\begin{equation}
M(x) , \, K(x) \, 
\in \, C_{-1} (0,b-a] ,
\end{equation}
where $F(x) \, \in \, C_{-1} (0,b-a]$ 
if there is such a function 
$G(x) \, \in \, C[0,b-a]$ 
that $F(x) \, = \, (x \, - \, a)^{p} \, G(x)$ 
with $p \, > \, -1$. 

Then, such set of kernel pairs is called the Luchko set ${\cal L}_{1}(\mathbb{R}_{+})$. 
\end{Definition}

As examples of kernel pairs $(M(x), \, K(x))$, which 
belong to the Luchko set, one can give the following. 

First example of the kernel pair
%%% 1
\[
M(x) \, = \,
h_{\alpha}(\lambda x) \, = \, 
\frac{ (\lambda \, x)^{\alpha -1}}{\Gamma(\alpha)} ,
\]
\begin{equation} \label{MK-Example-1} 
K(x) \, = \,
\lambda \, h_{1-\alpha}(\lambda x) \, = \, 
\frac{\lambda \, (\lambda \, x)^{-\alpha}}{\Gamma(1-\alpha)} .
\end{equation}

Second example of the kernel pair
%%% 2
\[
M(x) \, = \,
h_{\alpha,\lambda}(\lambda \, x) \, = \, 
\frac{(\lambda \, x)^{\alpha-1}}{\Gamma(\alpha)} e^{- \lambda \, t} ,
\]
\begin{equation} \label{MK-Example-2}
K(x) \, = \, 
\lambda \, h_{1-\alpha,\lambda}(\lambda x) \, + \, 
\frac{\lambda}{\Gamma(1-\alpha)} \, 
\gamma(1-\alpha,\lambda x) .
\end{equation}

Third example of the kernel pair
%%% 3
\[
M(x) \, = \, (\lambda \, x)^{\beta -1} \, 
E_{\alpha,\beta} [-(\lambda \, x)^{\alpha} ] ,
\]
\begin{equation} \label{MK-Example-3}
K(x) \, = \,
\frac{\lambda \, (\lambda \, x)^{\alpha - \beta}}{\Gamma(\alpha-\beta+1)} 
\, + \,
\frac{\lambda \, (\lambda \, x)^{- \beta}}{\Gamma(1-\beta+1)} .
\end{equation}
In equations \eqref{MK-Example-1}, \eqref{MK-Example-2}, \eqref{MK-Example-3}, $\gamma(\beta,x)$ is the incomplete gamma function, $E_{\alpha,\beta}[x]$ is the two-parameters Mittag-Leffler function, where $\lambda >0$, $[\lambda]=[x]^{-1}$, 
$0 \, < \, \alpha \, \le \, \beta \, < \, 1$, and $x \, > \, 0$. 

Note that the parametric is used to obtain the standard physical dimension of the described quantities when using GFC.
The physical dimensions of the kernels are $[M(x)]=[1]$ and $[K(x)]=[x]^{-1}$.

For other examples see Table 1 of \cite{GNCM2022}, pp.5-7, Table 1 of \cite{AP2022}, p.15, \cite{PA2023}, p.11,
\cite{Math-Multi2023}, pp.21-22, \cite{Entropy2023-Prob}, p.10). 
These examples can be extended by the kernel pairs 
$(M_{new}(x)= \lambda^{-1} K(x), K_{new}(x)=\lambda M(x))$, 
where $(M(x), K(x))$ are pairs of this list of examples.

%%%%%%%%%%%%%%%%%%%%%%%%%%%%%%%%%%%%%%%%%%%%%%%%%%%%%%%%%%%%%

\subsection{Substitution operator}

Let us define the substitution operator $Q_g$
(see \cite{SKM}, p.326, and equation 2.5.10 in \cite{KST}, p.100). 

\begin{Definition}
Let $g(x)$ be an increasing and positive monotone function on $[a, b)$ 
with $ - \, \infty \, < \, a \, < \, b \, < \, +\infty$, 
having a continuous first-order derivative $g^{(1)}(x) \, = \, dg(x)/dx$ for all $(a,b)$.

If $g^{(1)}(x) \, \ne \, 0$ for all $x \, \in \, (a,b)$,
then the substitution operator $Q_g$ is 
\begin{equation}
(Q_g \, f)(x) \, = \, f(g(x)) ,
\end{equation}
and $Q^{-1}_{g}$ is its inverse operator
\begin{equation}
(Q^{-1}_g \, f)(x) \, = \, f(g^{-1}(x)) 
\end{equation}
such that
\begin{equation}
( Q^{-1}_g \, Q_g \, f)(x) \, = \,
(Q_g \,Q^{-1}_g \, f)(x) \, = \, f(x) .
\end{equation}
\end{Definition}

Note that the condition ensuring the existence of an inverse function for the function 
$y \, = \, g(x)$ has the form \cite{ISS1979}, pp.671-672, states:
If the function $y \, = \, g(x)$ has a non-zero and sign-preserving derivative in some neighborhood of the point
$x_0$, then for this function of the neighborhood of the point $x_0$ there exists an inverse function $x \, = \, g^{-1}(y)$,
defined and differentiable in some neighborhood of the point $y_0$, where $y_0 \, = \, g(x_0)$.
The derivative of this inverse function at the point $y_0$ is equal to $(g^{(1)}(x_0))^{-1} \, = \, 1/ g^{(1)}(x_0)$.

For example one can consider the operator $Q_g$ 
with $g(x) \, = \, x \, - \, a$ such that
\[
Q_g \, f(x) \, = \, f(x \, - \, a) ,
\]
which is the shift operator that is usually denoted as $\tau_a$ 
(see equation 5.10 in \cite{SKM} and equation 1.3.27 in \cite{KST}), and the inverse operator is
\[
Q^{-1}_g \, f(x) \, = \, f(x \, + \, a) .
\]

Let us prove the property of the substitution operator.

\begin{Proposition} \label{Property-Q}
Let $g(x)$ be an increasing and positive monotone function on $(a, b]$, having a continuous first-order derivative $g^{(1)}(x)$ on $(a, b)$, and let $F(x) \, \in \, C^1(a,b]$.
Then,
\begin{equation} \label{Property-DQ-QD}
\left( \frac{1}{g^{(1)}(u)} \frac{d}{du} \right) \, Q_{g} \, F(u) \, = \, 
Q_g \, F^{(1)}(u) .
\end{equation}
\end{Proposition}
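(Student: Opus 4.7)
The proof is a direct application of the classical chain rule, repackaged through the substitution operator notation. The plan is as follows.

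First, I would unpack the definitions. By definition of $Q_g$, we have $(Q_g F)(u) = F(g(u))$, so the left-hand side of \eqref{Property-DQ-QD} reads $\frac{1}{g^{(1)}(u)}\frac{d}{du} F(g(u))$. The hypotheses are exactly what is needed to make this expression sensible: the condition $F \in C^1(a,b]$ gives the existence and continuity of $F^{(1)}$, while the assumptions that $g$ is increasing on $(a,b]$ with continuous $g^{(1)}$ guarantee that the reciprocal $1/g^{(1)}(u)$ is well defined and continuous on $(a,b)$. One should also observe that the monotonicity of $g$, combined with $g^{(1)}\ne 0$, ensures $g$ maps $(a,b]$ into the domain of $F^{(1)}$, so the composition $F^{(1)}\circ g$ is meaningful pointwise.

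Second, I would invoke the standard chain rule on the composition $F \circ g$. Since $g$ is differentiable on $(a,b)$ and $F$ is $C^1$ on the image of $g$, one obtains
\[
\frac{d}{du} F(g(u)) \, = \, F^{(1)}(g(u)) \, g^{(1)}(u).
\]
Dividing through by $g^{(1)}(u)$, which is permitted precisely because $g^{(1)}(u) \ne 0$, yields
\[
\frac{1}{g^{(1)}(u)}\, \frac{d}{du}\, F(g(u)) \, = \, F^{(1)}(g(u)) \, = \, (Q_g F^{(1)})(u),
\]
where the final equality is just the definition of $Q_g$ applied to the function $F^{(1)}$. Composing with the definition of $Q_g$ on the left-hand side recovers the claimed identity \eqref{Property-DQ-QD}.

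There is essentially no real obstacle here beyond a careful verification of the hypotheses. The only point worth flagging is the need to ensure that $g(u)$ lies in the domain of $F^{(1)}$ for every $u \in (a,b]$, which is used implicitly when applying the chain rule; this is covered by the monotonicity and positivity assumption on $g$ bundled into the set $\mathcal{G}(a,b]$. Once these hypotheses are unpacked, the proposition is simply the chain rule rewritten in operator form, and the proof reduces to a one-line computation.
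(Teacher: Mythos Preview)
Your proof is correct and follows essentially the same approach as the paper: both unpack the definition of $Q_g$ and apply the chain rule to $F\circ g$, with the paper writing the computation via the differential shorthand $\frac{1}{g^{(1)}(u)}\frac{d}{du}=\frac{d}{dg(u)}$ while you state the chain rule explicitly and then divide by $g^{(1)}(u)$. The mathematical content is identical.
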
 

\begin{proof}
Using that
\[
dg(u) \, = \, g^{(1)}(u) \, du ,
\]
\[
Q_g \, f(u) \, = \, f(g(u)) ,
\]
one can get
\[
\left( \frac{1}{g^{(1)}(u)} \frac{d}{du} \right) \, Q_{g} \, F(u) \, = \, 
\left( \frac{d}{dg(u)} \right) \, F(g(u)) \, = \, 
\]
\[
\left( \frac{dF(g(u))}{dg(u)} \right) \, = \, 
\left( \frac{dF(z)}{dz} \right)_{z=g(u)} \, = \, 
Q_g \, F^{(1)}(u) ,
\]
which was to be proved. Q.E.D.
\end{proof}

%%%%%%%%%%%%%%%%%%%%%%%%%%%%%%%%%%%%%%%%%%%%%%%%%%%%%%%%%%%%%
%%%%%%%%%%%%%%%%%%%%%%%%%%%%%%%%%%%%%%%%%%%%%%%%%%%%%%%%%%%%%
%%%%%%%%%%%%%%%%%%%%%%%%%%%%%%%%%%%%%%%%%%%%%%%%%%%%%%%%%%%%%

%%% \newpage

\section{Parametric GF Operators of $f(x)$ with Respect to $g(x)$}

In this section, we present the definitions and some properties of the parametric general fractional integrals and 
general fractional derivatives of a function $f(x)$ with respect to another function $g(x)$.

%%%%%%%%%%%%%%%%%%%%%%%%%%%%%%%%%%%%%%%%%%%%%%%%%%%%%%%%%%%%%

\subsection{Definition parametric GF operators}

Let us give definitions of the parametric GFIs of a function $f(x)$ with respect to another function $g(x)$.

\begin{Definition}
Let $g(x)$ be an increasing and positive monotone function on $(a, b]$, having a continuous first-order derivative $g^{(1)}(x)$ on $(a, b)$, and let $( M(x) , \, K(x)) \, \in \, {\cal L}_{1}(\mathbb{R}_{+})$. 
 
Then, the left-sided parametric GFI of a function $f(x)$ with respect to function $g(x)$ on $(a,b)$ is defined by
\begin{equation}
(I^{(M)}_{a+,g} f)(x) \, = \, 
\int^x_a M(g(x)-g(u)) \, f(u) \, g^{(1)}(u) \, du ,
\end{equation}
if $f(x) \, \in \, C_{-1,g}(a,b]$, 
where $a \, < \, x \, \le \, b$.
\end{Definition}

\begin{Definition}
Let $g(x)$ be an increasing and positive monotone function on $[a, b)$, 
having a continuous first-order derivative $g^{(1)}(x)$ on $(a, b)$, and let $( M(x) , \, K(x)) \, \in \, {\cal L}_{1}(\mathbb{R}_{+})$. 

Then, the right-sided parametric GFI of a function $f(x)$ with respect to function $g(x)$ on $(a,b)$ is defined by
\begin{equation}
(I^{(M)}_{b-,g} f)(x) \, = \, 
\int^b_x M(g(u)-g(x)) \, f(u) \, g^{(1)}(u) \, du ,
\end{equation}
if $f(x) \, \in \, C_{-1,g}(a,b]$,
where $a \, \le \, x \, < \, b$.
\end{Definition}

%%%%%%%%%%%%%%%%%%%%%%

Let us give definitions of the parametric GFDs of a function $f(x)$ with respect to another function $g(x)$, where
$g^{(1)}(x) \, \ne \, 0$ for $x \, \in (a,b)$.

\begin{Definition}
Let $g(x)$ be an increasing and positive monotone function on $(a, b]$, 
having a continuous first-order derivative $g^{(1)}(x) \, \ne \, 0$ on $(a, b)$, and let $( M(x) , \, K(x)) \, \in \, {\cal L}_{1}(\mathbb{R}_{+})$. 

Then, the parametric GFDs of the Riemann-Liouville type of function $f(x)$ with respect to function $g(x)$ are defined as
\begin{equation} \label{PGFD-RL-l}
(D^{(K)}_{a+,g} f)(x) \, = \, 
\frac{1}{g^{(1)}(x)} \, \frac{d}{dx}
\int^x_a K(g(x)-g(u)) \, f(u) \, g^{(1)}(u) \, du ,
\end{equation}
if $f(x) \, \in \, C^1_{-1,g}(a,b]$, 
where $a \, < \, x \, \le \, b$, and
\begin{equation} \label{PGFD-RL-r}
(D^{(K)}_{b-,g} f)(x) \, = \, 
- \, \frac{1}{g^{(1)}(x)} \, \frac{d}{dx}
\int^b_x K(g(u)-g(x)) \, f(u) \, g^{(1)}(u) \, du ,
\end{equation}
if $f(x) \, \in \, C^1_{-1,g}[a,b)$,
where $a \, \le \, x \, < \, b$.

Operator \eqref{PGFD-RL-l} is called left-sided parametric GFD of the RL type, and operator \eqref{PGFD-RL-r} is called right-sided parametric GFD of the RL type.

\end{Definition}

The parametric GFDs of the Riemann-Liouville type can be expressed via the parametric GFIs by the equations
\begin{equation} 
(D^{(K)}_{a+,g} f)(x) \, = \, 
\frac{1}{g^{(1)}(x)} \, \frac{d}{dx} \, 
(I^{(K)}_{a+,g} f)(x) ,
\end{equation}
where $a \, < \, x \, \le \, b$, and
\begin{equation}
(D^{(K)}_{b-,g} f)(x) \, = \, 
- \, \frac{1}{g^{(1)}(x)} \, \frac{d}{dx} \, 
(I^{(K)}_{b-,g} f)(x) ,
\end{equation}
where $a \, \le \, x \, < \, b$.

\begin{Definition}
Let $g(x)$ be an increasing and positive monotone function on $(a, b]$, 
having a continuous first-order derivative $g^{(1)}(x) \, \ne \, 0$ on $(a, b)$, and let $( M(x) , \, K(x)) \, \in \, {\cal L}_{1}(\mathbb{R}_{+})$. 

Then, the parametric GFDs of the Caputo type of function $f(x)$ with respect to function $g(x)$ are defined as
\begin{equation} \label{PGFD-C-l}
(D^{(K),*}_{a+} f)(x) \, = \, 
\int^x_a K(g(x)-g(u)) \, f^{(1)}(u) \, du ,
\end{equation}
if $f(x) \, \in \, C^1_{-1,g}(a,b]$,
where $a \, < \, x \, \le \, b$, and
\begin{equation} \label{PGFD-C-r}
(D^{(K),*}_{b-,g} f)(x) \, = \, 
- \, \int^b_x K(g(u)-g(x)) \, f^{(1)}(u) \, du ,
\end{equation}
if $f(x) \, \in \, C^1_{-1,g}[a,b)$,
where $a \, \le \, x \, < \, b$.

Operator \eqref{PGFD-C-l} is called left-sided parametric GFD of the Caputo type, and operator \eqref{PGFD-C-r} is called right-sided parametric GFD of the Caputo type.
\end{Definition}

The parametric GFDs of the Caputo type can be expressed via the parametric GFIs by the equations
\[
(D^{(K),*}_{a+,g} f)(x) \, = \, 
\left(I^{(K)}_{a+,g} \frac{1}{g^{(1)}(u)} \, \frac{d}{du} \, f(u) \right)(x) ,
\]
where $a \, < \, x \, \le \, b$, and
\[
(D^{(K),*}_{b-,g} f)(x) \, = \, 
- \, 
\left(I^{(K)}_{b-,g} \frac{1}{g^{(1)}(u)} \, \frac{d}{du} \, f(u) \right)(x) ,
\]
where $a \, \le \, x \, < \, b$.

%%%%%%%%%%%%%%%%%%%%%%%%%%%%%%%%%%%%%%%%%%%%%%%%%%%%%%%%%%%%%

\subsection{Examples of special cases of parametric GF operators}

Let us give some examples of the parametric GFDs for special forms of the function $g(x)$.

A) If $g(x) \, = \, x$, then the GFDs of the Riemann-Liouville type of function $f(x)$ with respect to function $g(x)$ coincide with the GFDs of the Riemann-Liouville type of function $f(x)$ by
\[
(D^{(K)}_{a+,x} f)(x) \, = \, 
(D^{(K)}_{a+} f)(x) \, = \,
\frac{d}{dx}
\int^x_a K(x \, - \, u) \, f(u) \, du ,
\]
\[
(D^{(K)}_{b-,x} f)(x) \, = \, 
- \, \frac{d}{dx}
\int^b_x K(u \, - \, x) \, f(u) \, du .
\]

B) If $K(x) \, = \, h_{1-\alpha}(x)$, 
then the GFDs of the Riemann-Liouville type of function $f(x)$ with respect to function $g(x)$ coincide with the fractional derivatives of the Riemann-Liouville type of function $f(x)$ with respect to function $g(x)$ \cite{KST,SKM} by
\[
(D^{(h_{1-\alpha})}_{a+,g} f)(x) \, = \, 
\frac{1}{\Gamma(1-\alpha)} 
\frac{1}{g^{(1)}(x)} \, \frac{d}{dx}
\int^x_a (g(x)-g(u))^{-\alpha} \, f(u) \, g^{(1)}(u) \, du ,
\]
\[
(D^{(h_{1-\alpha})}_{b-,g} f)(x) \, = \,
- \, \frac{1}{\Gamma(1-\alpha)} 
\frac{1}{g^{(1)}(x)} \, \frac{d}{dx}
\int^b_x (g(u)-g(x))^{-\alpha} \, f(u) \, g^{(1)}(u) \, du ,
\]
where $\alpha \, \in \, (0,1)$.
For the limit $\alpha \, \to \, 1-$, we have
\[
\lim\limits_{\alpha \to 1-}
(D^{(h_{1-\alpha})}_{a+,g} f)(x) \, = \, 
\frac{f^{(1)}(x)}{g^{(1)}(x)} ,
\]
\[
\lim\limits_{\alpha \to 1-}
(D^{(h_{1-\alpha})}_{b-,g} f)(x) \, = \, 
- \, \frac{f^{(1)}(x)}{g^{(1)}(x)} .
\]
For details see equations 2.5.28 and 2.5.29 in \cite{KST}, p.102.

C) If we consider $g(x) \, = \, ln(x/a)$, we obtain the Hadamard general fractional operators. 
The Hadamard fractional integrals 
$(I^{(h_{\alpha})}_{a+,\ln(x/a)} f)(x)$, 
$(I^{(h_{\alpha})}_{b-,\ln(x/a)} f)(x)$, 
and Hadamard fractional derivatives 
$(D^{(h_{1-\alpha})}_{a+,\ln(x/a)} f)(x)$,
$(D^{(h_{1-\alpha})}_{b-,\ln(x/a)} f)(x)$,
are described in Section 2.7 of \cite{KST}, pp.110-120.

%%%%%%%%%%%%%%%%%%%%%% 

The Hadamard FIs are parametric FIs with respect to the function 
$g(x) \, = \, \ln(x/a)$. 
Therefore the Hadamard FIs are "fractional integral of a function with respect to function $g(x) \, = \, \ln(x/a)$". 
However, the condition of the existence of a continuous derivatives $g^{(1)}(x)$
%%%, which was assumed in \cite{SKM,KST},
does not hold in this case. 
So the Hadamard FIs are need to be independently considered. 
Note that the continuity of $g^{(1)}(x)$ would be satisfied, if we take the lower limit of integration to be equal to $a \, > \, 0$ instead of zero, but then the property of invariance of the integral relative to dilation would be broken \cite{SKM}, p.330. 
Note that the Hadamard-type GFC is proposed 
in \cite{FF-2023} as scale-invariant GFC of the Mellin convolution operators.

D) If we take $g(x) \, = \, x^{\sigma}$, then we get operators that are expressed in terms of Erdelyi-Kober fractional operators
\[
(\, ^{EK}I^{\alpha}_{a+, \sigma, \eta} f)(x) \, = \, 
x^{-\sigma (\alpha+\eta)} \, 
I^{\alpha}_{a+,g}(x^{\sigma \eta} f(x)) ,
\]
\[
(\, ^{EK}D^{\alpha}_{a+, \sigma, \eta} f)(x) \, = \, 
x^{-\sigma \eta} \, 
D^{\alpha}_{a+,g}(x^{\sigma (\alpha + \eta} f(x)) .
\]
The Erdelyi-Kober fractional integrals 
$(\, ^{EK}I^{\alpha}_{a+, \sigma, \eta} f)(x)$ and
the Erdelyi-Kober fractional derivatives $(\, ^{EK}D^{\alpha}_{a+, \sigma, \eta} f)(x)$ see in Section 2.6 of \cite{KST}, pp.105-110.
Note that in general form with 3 parameters $(\alpha, \sigma, \eta)$ it appeared first in the book of \cite{Sneddon1} and his survey in \cite{Sneddon1}\cite{Sneddon2} "The use in mathematical analysis of E-K operators and some of their applications", 37-79). Then detailed theory of E-K operators was developed in Ch. 2 of \cite{Kiryakova} and also in book of \cite{YakubovichLuchko}.

%%%%%%%%%%%%%%%%%%%%%%%%%%%%%%%%%%%%%%%%%%%%%%%%%%%%%%%%%%%%%
%%%%%%%%%%%%%%%%%%%%%%%%%%%%%%%%%%%%%%%%%%%%%%%%%%%%%%%%%%%%%
%%%%%%%%%%%%%%%%%%%%%%%%%%%%%%%%%%%%%%%%%%%%%%%%%%%%%%%%%%%%%

\subsection{GF operators on $(a,b)$ and $[0,b-a]$}

Let us note relations of the GFI on the finite interval $(a,b)$, which is proposed in \cite{Luchko2023},
and the GFI on $[0,b-a]$, which is proposed in \cite{Luchko2021-1}.

In Luchko work \cite{Luchko2023}, 
the GFI on the finite interval $(a,b)$ is
\[
(I^{(M)}_{a+} f)(x) \, = \, 
\int^x_a M(x-u) \, f(u) \, du .
\]

Let us prove the following statement.

\begin{Proposition}

The left-sided GFI on the finite interval $(a,b)$ can be expressed via 
the GFI on $[0,b-1]$ by the equation
\begin{equation}
(I^{(M)}_{a+} f)(x) \, = \, 
(Q_{g} \, (I^{(M)}_{0+} \, Q^{-1}_g \, f))(x) ,
\end{equation}
if $f(x) \, \in \,C_{-1}(a,b]$, 
where the substitution operator 
$Q_g$ with $g(x) \, = \, x \, - \, a$.

The right-sided GFI on the finite interval $(a,b)$ can be expressed via 
the GFI on $[0,b-1]$ by the equation
\begin{equation}
(I^{(M)}_{b-} f)(x) \, = \, 
(Q_{g} \, (I^{(M)}_{0+} \, Q^{-1}_g \, f))(x) ,
\end{equation}
if $f(x) \, \in \,C_{-1}[a,b)$, 
where the substitution operator 
$Q_g$ with $g(x) \, = \, b \, - \, x$.

\end{Proposition}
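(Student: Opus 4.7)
The plan is to prove both equalities by a direct change-of-variables argument, since the statement is essentially saying that the GFI on $(a,b)$ is the conjugate (under $Q_g$) of the GFI on $[0,b-a]$, and the relevant $g$ is an affine map that merely shifts or reflects the interval.

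For the left-sided equality, I would first unravel $Q^{-1}_g$. Since $g(x)=x-a$ gives $g^{-1}(y)=y+a$, we have $(Q^{-1}_g f)(u)=f(u+a)$, which is a well-defined element of $C_{-1}[0,b-a]$ when $f\in C_{-1}(a,b]$. Then I would plug this into the definition of $I^{(M)}_{0+}$ on $[0,b-a]$ to obtain
\[
(I^{(M)}_{0+}\,Q^{-1}_g f)(y)\;=\;\int_0^{y} M(y-u)\,f(u+a)\,du.
\]
A substitution $v=u+a$ (so $du=dv$, new limits $a$ and $y+a$) converts this to $\int_{a}^{y+a}M(y+a-v)f(v)\,dv$. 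Finally, applying $Q_g$ means evaluating at $y=g(x)=x-a$, and the limits and kernel collapse to $\int_a^x M(x-v)f(v)\,dv=(I^{(M)}_{a+}f)(x)$.

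For the right-sided equality, I would argue analogously with $g(x)=b-x$, noting that here $g$ is decreasing rather than increasing, so strictly speaking this $g$ lies outside the class in the Definition of $Q_g$; however, $g$ is still a bijection $[a,b]\to[0,b-a]$ with $g^{-1}(y)=b-y$, and the substitution operator is still well-defined by the same formulas, which is all that is used in the proof. Then $(Q^{-1}_g f)(u)=f(b-u)$, and
\[
(I^{(M)}_{0+}\,Q^{-1}_g f)(y)\;=\;\int_0^{y}M(y-u)\,f(b-u)\,du.
\]
The substitution $v=b-u$ (so $du=-dv$, limits flip from $b$ to $b-y$) produces $\int_{b-y}^{b}M(y-b+v)f(v)\,dv$. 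Applying $Q_g$ by setting $y=b-x$ gives $\int_x^b M(v-x)f(v)\,dv=(I^{(M)}_{b-}f)(x)$.

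There is no real obstacle here; the proof is bookkeeping for two changes of variable. The only genuinely subtle point worth flagging in the write-up is the mismatch between the monotonicity hypothesis in the Definition of $Q_g$ (increasing) and the decreasing $g(x)=b-x$ used in the right-sided part. I would handle this either by observing that the Definition extends without change to monotone decreasing $g$ (since only invertibility and the chain-rule identity of Proposition~\ref{Property-Q} are needed, and the latter is not even invoked in this proposition), or by rewriting the right-sided statement using $g(x)=x-a$ together with an explicit reflection, producing the same identity.
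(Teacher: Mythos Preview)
Your proof is correct and follows essentially the same change-of-variables argument as the paper; the only cosmetic difference is that the paper starts from $(I^{(M)}_{a+}f)(x)$ and $(I^{(M)}_{b-}f)(x)$ and substitutes to reach the conjugated form, whereas you start from the conjugated expression and substitute back. Your remark that $g(x)=b-x$ is decreasing and hence formally falls outside the stated hypotheses on $Q_g$ is a valid observation that the paper passes over silently.
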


\begin{proof}

Using the variable $\xi \, = \, u \, - \, a$, we get
\[
(I^{(M)}_{a+} f)(x) \, = \, 
\int^x_a M(x-u) \, f(u) \, du \, = \, 
\]
\[
\int^{x-a}_0 M(x-(\xi+a)) \, f(\xi \, + \, a) \, d \xi 
\, = \, 
\int^{x-a}_0 M((x-a)-\xi) \, (Q^{-1}_g \, f)(\xi) \, d \xi 
\, = \, 
\]
\[
Q_g \, 
\int^{x}_0 M(x-\xi) \, (Q^{-1}_g \, f)(\xi) \, d \xi 
\, = \, 
Q_{g} \, ( I^{(M)}_{0+} \, Q^{-1}_g \, f)(x) ,
\]
where $g(x)=x-a$

Using the variable $\xi \, = \, b \, - \, u$, we get
\[
(I^{(M)}_{b-} f)(x) \, = \, 
\int^b_x M(u-x) \, f(u) \, du \, = \, 
\]
\[
\int^{b-x}_0 M((b-\xi)-x) \, f(b \, - \, \xi) \, d \xi 
\, = \, 
\int^{b-x}_0 M((b-x)-\xi) \, (Q^{-1}_g \, f)(\xi) \, d \xi 
\, = \, 
\]
\[
Q_g \, 
\int^{x}_0 M(x-\xi) \, (Q^{-1}_g \, f)(\xi) \, d \xi 
\, = \, 
Q_{g} \, ( I^{(M)}_{0+} \, Q^{-1}_g \, f)(x) .
\]
where $g(x)=b-x$

\end{proof}

Therefore, some properties of the GFIs on finite interval $(a,b)$, that are described in \cite{Luchko2023}, follow from the corresponding properties of the GFIs that are described in \cite{Luchko2021-1}. 

%%%%%%%%%%%%%%%%%%%%%%%%%%%%%%%%%%%%%%%%%%%%%%%%%%%%%%%%%%%%%
%%%%%%%%%%%%%%%%%%%%%%%%%%%%%%%%%%%%%%%%%%%%%%%%%%%%%%%%%%%%%
%%%%%%%%%%%%%%%%%%%%%%%%%%%%%%%%%%%%%%%%%%%%%%%%%%%%%%%%%%%%%

\subsection{Parametric GF integrals via GF integrals on $(a,b)$}

Let us prove the following statement.

\begin{Proposition} \label{PGFI-GFIab}
Let $g(x)$ be an increasing and positive monotone function on $[a, b)$ 
with $ - \, \infty \, < \, a \, < \, b \, < \, +\infty$, 
having a continuous first-order derivative $g^{(1)}(x) \, = \, dg(x)/dx$ for all $(a,b)$.

If $g^{(1)}(x) \, \ne \, 0$ for all $x \in (a,b)$,
the left-sided and right-sided parametric GFIs of $f(x)$ with respect to $g(x)$ can be expressed via 
the GFIs on the interval $(a,b)$ by the equations
\begin{equation} \label{I-QIQ-l}
(I^{(M)}_{a+,g} f)(x) \, = \, 
Q_{g} \, (I^{(M)}_{g(a)+} \, Q^{-1}_g \, f)(x) ,
\end{equation}
if $f(x) \, \in \,C_{-1,g}(a,b]$, and
\begin{equation} \label{I-QIQ-r}
(I^{(M)}_{b-,g} f)(x) \, = \, 
Q_{g} \, (I^{(M)}_{g(b)-} \, Q^{-1}_g \, f)(x) ,
\end{equation}
if $f(x) \, \in \, C_{-1,g}[a,b)$, 
where $Q_g$ is the substitution operator.

\end{Proposition}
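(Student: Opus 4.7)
The plan is to prove both identities by the change of variable $\xi = g(u)$ in the integrals defining the parametric GFIs. Since $g$ is monotone increasing and positive on $[a,b)$ with a continuous non-vanishing derivative on $(a,b)$, the inverse function theorem cited after the definition of $Q_g$ guarantees that $g^{-1}$ exists and is continuously differentiable on $(g(a),g(b))$, so the substitution is valid.

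For the left-sided identity, I would start from
\[
(I^{(M)}_{a+,g} f)(x) \, = \, \int^x_a M(g(x)-g(u)) \, f(u) \, g^{(1)}(u) \, du,
\]
and substitute $\xi = g(u)$, giving $d\xi = g^{(1)}(u)\,du$ and transforming the limits from $(a,x)$ to $(g(a), g(x))$. Writing $f(u) = f(g^{-1}(\xi)) = (Q^{-1}_g f)(\xi)$ converts the integral to
\[
\int^{g(x)}_{g(a)} M(g(x)-\xi) \, (Q^{-1}_g f)(\xi) \, d\xi \, = \, (I^{(M)}_{g(a)+} \, Q^{-1}_g f)(g(x)),
\]
by the definition of the GFI on the finite interval $(g(a),g(b))$ recalled from \cite{Luchko2023}. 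Since by definition $(Q_g F)(x) = F(g(x))$, the right-hand side equals $(Q_g (I^{(M)}_{g(a)+} Q^{-1}_g f))(x)$, which is exactly \eqref{I-QIQ-l}.

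For the right-sided identity, I would perform the same substitution $\xi = g(u)$ on the interval $(x,b)$, which maps to $(g(x), g(b))$, obtaining
\[
(I^{(M)}_{b-,g} f)(x) \, = \, \int^{g(b)}_{g(x)} M(\xi-g(x)) \, (Q^{-1}_g f)(\xi) \, d\xi \, = \, (I^{(M)}_{g(b)-} \, Q^{-1}_g f)(g(x)),
\]
from which \eqref{I-QIQ-r} follows by the same application of $Q_g$.

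The main obstacle is not the change of variable itself, which is essentially mechanical, but rather the verification that the right-hand sides are well defined, i.e.\ that $Q^{-1}_g f$ belongs to the space $C_{-1}(g(a),g(b)]$ (respectively $C_{-1}[g(a),g(b))$) required by the GFI on a finite interval of \cite{Luchko2023}. This reduces to the observation that if $f(x) = (g(x)-g(a))^p m_p(x)$ with $m_p \in C[a,b]$ and $p>-1$, then $(Q^{-1}_g f)(y) = (y-g(a))^p \, m_p(g^{-1}(y))$, where $m_p \circ g^{-1}$ is continuous on $[g(a),g(b)]$ as a composition of continuous functions. The right-sided case is analogous with $(g(b)-g(x))^q$ converting to $(g(b)-y)^q$ under $Q^{-1}_g$. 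Once these membership checks are in place, the change-of-variable computation above closes the argument.
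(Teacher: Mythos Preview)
Your argument is correct and follows essentially the same route as the paper: both proofs perform the substitution $\xi = g(u)$ in the defining integrals, identify $f(g^{-1}(\xi))$ with $(Q^{-1}_g f)(\xi)$, recognise the resulting integral as the finite-interval GFI evaluated at $g(x)$, and then apply $Q_g$. Your added verification that $Q^{-1}_g f$ lands in the right $C_{-1}$ space is a nice detail that the paper leaves implicit.
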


\begin{proof}

Using the definition of the GFIs of $f(x)$ with respect to $g(x)$
and the variable $\xi \, = \, g(u)$, we get
\[
(I^{(M)}_{a+,g} f)(x) \, = \, 
\int^x_a M(g(x)-g(u)) \, f(u) \, g^{(1)}(u) \, du 
\, = \, 
\]
\[
\int^{g(x)}_{g(a)} M(g(x) \, - \, \xi) \, 
f(g^{-1}(\xi)) \, d \xi 
\, = \, 
\]
\[
\int^{g(x)}_{g(a)} M(g(x) \, - \, \xi) \, 
(Q^{-1}_g f(\xi)) \, d \xi 
\, = \, 
\]
\[
Q_{g} \, \int^{x}_{g(a)} M(x \, - \, \xi) \, 
(Q^{-1}_g f(\xi)) \, d \xi 
\, = \, 
\]
\[
Q_{g} \, (I^{(M)}_{g(a)+} Q^{-1}_g f)(x) ,
\]
which was to be proved. 

For right-sided GFIs of $f(x)$ with respect to $g(x)$

\[
(I^{(M)}_{b-,g} f)(x) \, = \, 
\int^b_x M(g(u)-g(x)) \, f(u) \, g^{(1)}(u) \, du 
\, = \, 
\]
\[
\int^{g(b)}_{g(x)} M(\xi \, - \, g(x)) \, 
f(g^{-1}(\xi)) \, d \xi 
\, = \, 
\]
\[
\int^{g(b)}_{g(x)}M(\xi \, - \, g(x)) \, 
(Q^{-1}_g f(\xi)) \, d \xi 
\, = \, 
\]
\[
Q_{g} \, \int^{g(b)}_{x} M(\xi \, - \, x) \, 
(Q^{-1}_g f(\xi)) \, d \xi 
\, = \, 
\]
\[
Q_{g} \, (I^{(M)}_{g(b)-} Q^{-1}_g f)(x) ,
\]
which was to be proved.

\end{proof}

Therefore, some properties of 
the GFIs of $f(x)$ with respect to $g(x)$
follow from the corresponding properties of 
the GFIs on finite interval $(a,b)$ that are described in \cite{Luchko2023}. 

\begin{Proposition}[Semi-group property of parametric GFIs]

Let kernel pairs $(M_1(x), \, K_1(x))$ and $(M_2(x), \, K_2(x))$ belong to the Luchko set ${\cal L}_{1}(\mathbb{R}_{+})$.
Let $g(x)$ be an increasing and positive monotone function on $[a, b)$ 
with $ - \, \infty \, < \, a \, < \, b \, < \, +\infty$, 
having a nonzero continuous first-order derivative $g^{(1)}(x) \, \ne \, 0$ for all $(a,b)$.

Then, the left-sided and right-sided parametric GFIs of the satisfy the equalities
\begin{equation}
(I^{(M_1)}_{a+,g} \, I^{(M_2)}_{a+,g} f)(x) \, = \, (I^{(M_1*M_2)}_{a+,g} f)(x)
\end{equation}
if $f(x) \, \in \, C_{-1,g}(a,b]$, 
i.e. $(Q^{-1}_g f)(x) \, \in \, C_{-1}(a,b]$, and
\begin{equation}
(I^{(M_1)}_{b-,g} \, I^{(M_2)}_{b-,g} f)(x) \, = \, (I^{(M_1*M_2)}_{b-,g} f)(x)
\end{equation}
if $f(x) \, \in \, C_{-1,g}[a,b)$, 
i.e. $(Q^{-1}_g f)(x) \, \in \, C_{-1}[a,b)$. 
\end{Proposition}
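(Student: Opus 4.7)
The plan is to reduce the semi-group identity for parametric GFIs to the known semi-group property of the GFIs on the finite interval $(a,b)$ (established in \cite{Luchko2023}), using Proposition \ref{PGFI-GFIab} as a conjugation formula via the substitution operator $Q_g$.

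First, I would invoke Proposition \ref{PGFI-GFIab} twice on the left-hand side: once to rewrite the outer operator, and once to rewrite the inner operator. Schematically,
\[
(I^{(M_1)}_{a+,g} \, I^{(M_2)}_{a+,g} f)(x) \, = \,
Q_g \bigl(I^{(M_1)}_{g(a)+} \, Q^{-1}_g \, I^{(M_2)}_{a+,g} f\bigr)(x)
\, = \,
Q_g \bigl(I^{(M_1)}_{g(a)+} \, Q^{-1}_g \, Q_g \, I^{(M_2)}_{g(a)+} \, Q^{-1}_g f\bigr)(x).
\]
Using the inverse relation $Q^{-1}_g Q_g \, = \, \mathrm{id}$ from the definition of the substitution operator, the middle cancellation collapses the expression to $Q_g (I^{(M_1)}_{g(a)+} \, I^{(M_2)}_{g(a)+} \, Q^{-1}_g f)(x)$. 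I would then apply the semi-group property of the GFIs on the finite interval, proved in \cite{Luchko2023}, to replace the two consecutive integrals on $(g(a),g(b))$ by a single GFI with kernel $M_1 * M_2$. Finally, one more application of Proposition \ref{PGFI-GFIab} converts the expression back to $(I^{(M_1*M_2)}_{a+,g} f)(x)$, which proves the left-sided identity.

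The right-sided identity proceeds by exactly the same pattern, using the right-sided analogue of Proposition \ref{PGFI-GFIab} and the right-sided semi-group property on $(g(a),g(b))$; the role of $Q_g$ and $Q^{-1}_g$ is unchanged.

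The main technical obstacle is a regularity/bookkeeping check rather than any analytic subtlety: one must verify that after applying $Q^{-1}_g$ and the inner parametric GFI, the resulting function lies in $C_{-1}$ on the appropriate interval $(g(a),g(b)]$ (respectively $[g(a),g(b))$), so that both Proposition \ref{PGFI-GFIab} and the Luchko semi-group property are applicable. This reduces to the stated hypothesis $f \in C_{-1,g}(a,b]$, which by definition is equivalent to $Q^{-1}_g f \in C_{-1}(g(a),g(b)]$, combined with the fact (from \cite{Luchko2023}) that a Luchko GFI maps $C_{-1}$ into itself. Once this mapping property is cited, the composition identity above is a direct chain of substitutions and requires no further computation.
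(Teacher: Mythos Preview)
Your proposal is correct and follows essentially the same approach as the paper: conjugate via Proposition \ref{PGFI-GFIab}, collapse $Q^{-1}_g Q_g$, apply the semi-group property from \cite{Luchko2023} on $(g(a),g(b))$, and conjugate back. The paper additionally remarks that $M_1*M_2 \in C_{-1}(0,b-a]$ so that the composite kernel is admissible, which you could add for completeness, but otherwise the arguments coincide.
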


\begin{proof}
Let us prove the inequality of the left-sided parametric GFI.

In the proof one can use the fact that
$(M_1 \, * \, M_2)(x) \, \in \, C_{-1}(0,b-a]$,
if $M_1(x) \, \in \, C_{-1}(0,b-a]$ and
$M_2(x) \, \in \, C_{-1}(0,b-a]$
(see \cite{Luchko2023}, p.6).

Using the representation of the parametric GFIs via GFIs on the interval $(a,b)$ 
(see Proposition \ref{PGFI-GFIab}), 
and Proposition 3 of \cite{Luchko2023}, pp.5-6, one can get
\[
(I^{(M_1)}_{a+,g} \, I^{(M_2)}_{a+,g} f)(x) \, = \, 
\left( (Q_g \, I^{(M_1)}_{g(a)+} \, Q^{-1}_g) \, 
(Q_g \, I^{(M_2)}_{g(a)+} \, Q^{-1}_g) \, f \right)(x) \, = \, 
\]
\[
\left( (Q_g \, (I^{(M_1)}_{g(a)+} \, 
I^{(M_2)}_{g(a)+}) \, Q^{-1}_g) \, f \right)(x) \, = \, 
\left( (Q_g \, (I^{(M_1 * M_2)}_{g(a)+} \, 
\, Q^{-1}_g) \, f \right)(x) \, = \, 
 (I^{(M_1*M_2)}_{a+,g} f)(x) ,
\]
which was to be proved. 

The property of the right-sided GFD of the RL type is proved similarly.
Q.E.D.
\end{proof}

%%%%%%%%%%%%%%%%%%%%%%%%%%%%%%%%%%%%%%%%%%%%%%%%%%%%%%%%%%%%%
%%%%%%%%%%%%%%%%%%%%%%%%%%%%%%%%%%%%%%%%%%%%%%%%%%%%%%%%%%%%%
%%%%%%%%%%%%%%%%%%%%%%%%%%%%%%%%%%%%%%%%%%%%%%%%%%%%%%%%%%%%%

\subsection{Parametric GF derivatives via GF derivatives on $(a,b)$}

Similar properties of the parametric GFIs
exist for the parametric GFDs of the Riemann-Liouville type in the form.
%%%of $f(x)$ with respect to $g(x)$ in the form

\begin{Proposition} \label{D-QDQ-RL}
Let kernel pair $(M(x), \, K(x))$ belongs to the Luchko set ${\cal L}_{1}(\mathbb{R}_{+})$.
Let $g(x)$ be an increasing and positive monotone function on $[a, b)$ 
with $ - \, \infty \, < \, a \, < \, b \, < \, +\infty$, 
having a nonzero continuous first-order derivative $g^{(1)}(x) \, \ne \, 0$ for all $(a,b)$.

Then, the left-sided and right-sided parametric GFDs of the Riemann-Liouville type satisfy the equalities
\begin{equation}
(D^{(K)}_{a+,g} f)(x) \, = \, 
Q_{g} \, (D^{(K)}_{g(a)+} \, Q^{-1}_g \, f)(x) ,
\end{equation}
if $f(x) \, \in \, C^{1}_{-1,g}(a,b]$, i.e. 
$f(g^{-1}(x)) \, \in \, C^{1}_{-1}(a,b]$, and 
\begin{equation}
(D^{(K)}_{b-,g} f)(x) \, = \, 
Q_{g} \, (D^{(K)}_{g(b)-} \, Q^{-1}_g \, f)(x) .
\end{equation}
if $f(x) \, \in \, C^{1}_{-1,g}[a,b)$, i.e. 
$f(g^{-1}(x)) \, \in \, C^{1}_{-1}[a,b)$. 
\end{Proposition}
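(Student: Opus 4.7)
The plan is to reduce this identity to the composition of two earlier-established facts: the integral conjugation formula from Proposition \ref{PGFI-GFIab}, which rewrites the parametric GFI as $Q_g \circ I^{(K)}_{g(a)+} \circ Q^{-1}_g$, and the substitution–derivative identity \eqref{Property-DQ-QD} from Proposition \ref{Property-Q}, which collapses the operator $\tfrac{1}{g^{(1)}(x)} \tfrac{d}{dx}$ applied to $Q_g F$ into $Q_g$ applied to $F^{(1)}$.

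Concretely, for the left-sided case I would start from the alternate form of the definition,
\[
(D^{(K)}_{a+,g} f)(x) \, = \, \frac{1}{g^{(1)}(x)} \, \frac{d}{dx} \, (I^{(K)}_{a+,g} f)(x),
\]
and substitute equation \eqref{I-QIQ-l} to get
\[
(D^{(K)}_{a+,g} f)(x) \, = \, \frac{1}{g^{(1)}(x)} \, \frac{d}{dx} \, Q_g \, (I^{(K)}_{g(a)+} \, Q^{-1}_g \, f)(x).
\]
Then I would apply Proposition \ref{Property-Q} with the choice $F \, = \, I^{(K)}_{g(a)+} \, Q^{-1}_g \, f$, turning the right-hand side into $Q_g \, F^{(1)}(x)$. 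Since $F^{(1)}(x) \, = \, \frac{d}{dx}(I^{(K)}_{g(a)+} \, Q^{-1}_g \, f)(x)$ is by definition $(D^{(K)}_{g(a)+} \, Q^{-1}_g \, f)(x)$ (the non-parametric Luchko GFD of the RL type on $(g(a), g(b)]$), the left-sided identity follows immediately.

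The right-sided case is identical in structure: use \eqref{I-QIQ-r} in place of \eqref{I-QIQ-l}, apply Proposition \ref{Property-Q} again, and then absorb the leading $-$ sign into the defining relation $(D^{(K)}_{b-} h)(x) \, = \, - \, \frac{d}{dx}(I^{(K)}_{b-} h)(x)$ for the non-parametric right-sided GFD.

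The main obstacle is not the algebraic manipulation but the hypothesis bookkeeping: Proposition \ref{Property-Q} requires $F \, \in \, C^1(a,b]$, and to invoke it legitimately I must verify that $I^{(K)}_{g(a)+} \, Q^{-1}_g \, f$ has that regularity. This amounts to checking that the assumption $f \, \in \, C^1_{-1,g}(a,b]$ is equivalent, via the change of variable induced by $g$, to $Q^{-1}_g f \, \in \, C^1_{-1}(g(a), g(b)]$, and then invoking the regularity properties of the Luchko GFI on the finite interval proved in \cite{Luchko2023}. The equivalence uses monotonicity of $g$ together with $g^{(1)}(x) \, \ne \, 0$ on $(a,b)$, which guarantees a differentiable inverse $g^{-1}$ with $(g^{-1})^{(1)}(y) \, = \, 1/g^{(1)}(g^{-1}(y))$ and converts factors of $(g(x) - g(a))^p$ into factors of $(y - g(a))^p$ with continuous remainder, so the function class is preserved.
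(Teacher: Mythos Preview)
Your proposal is correct and follows essentially the same route as the paper: start from the definition $(D^{(K)}_{a+,g} f)(x) = \tfrac{1}{g^{(1)}(x)}\tfrac{d}{dx}(I^{(K)}_{a+,g} f)(x)$, substitute the conjugation identity \eqref{I-QIQ-l} from Proposition \ref{PGFI-GFIab}, apply Proposition \ref{Property-Q} to commute $\tfrac{1}{g^{(1)}(x)}\tfrac{d}{dx}$ past $Q_g$, and recognise the result as the non-parametric RL derivative. Your discussion of the regularity bookkeeping (checking that $I^{(K)}_{g(a)+}Q^{-1}_g f$ lies in $C^1$ so that Proposition \ref{Property-Q} applies) is actually more careful than the paper, which simply performs the formal manipulation without commenting on the hypothesis.
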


%%% PROOF for RL
\begin{proof}
Let us prove the inequality of the left-sided parametric GFD.

Using the definition of the left-sided parametric GFD of the RL type and the substitution operator
\[
(D^{(K)}_{a+,g} f)(x) \, = \, 
\left(\frac{1}{g^{(1)}(x)} \frac{d}{dx} \right) \, 
(I^{(K)}_{a+,g} f)(x) \, = \, 
\]
\[
\left(\frac{1}{g^{(1)}(x)} \frac{d}{dx} \right) \, 
Q_g \, (I^{(K)}_{g(a)+} \, Q^{-1}_g \, f)(x) \, = \, 
\]
\[
Q_g \, \frac{d}{d x} \, (I^{(K)}_{g(a)+} \, Q^{-1}_g \, f)(x) \, = \, 
Q_g \, \left( D^{(K)}_{g(a)+} \, Q^{-1}_g \, f \right) (x) ,
\]
which was to be proved. 

The property of the right-sided GFD of the RL type is proved similarly.
Q.E.D.
\end{proof}

Note that for usual fractional derivatives of the Riemann-Liouville type
these properties are presented in \cite{SKM}, pp.326-327.

Similar properties exist for the parametric GFDs of the Caputo type in the form.

\begin{Proposition} \label{D-QDQ-C}
Let kernel pair $(M(x), \, K(x))$ belongs to the Luchko set ${\cal L}_{1}(\mathbb{R}_{+})$.
Let $g(x)$ be an increasing and positive monotone function on $[a, b)$ 
with $ - \, \infty \, < \, a \, < \, b \, < \, +\infty$, 
having a nonzero continuous first-order derivative $g^{(1)}(x) \, \ne \, 0$ for all $(a,b)$.

Then, the left-sided and right-sided parametric GFDs of the Caputo type satisfy the equalities
\begin{equation}
({}D^{(K),*}_{a+,g} f)(x) \, = \, 
Q_{g} \, (D^{(K),*}_{g(a)+} \, Q^{-1}_g \, f)(x) ,
\end{equation}
if $f(x) \, \in \, C^{1}_{-1,g}(a,b]$, i.e. 
$f(g^{-1}(x)) \, \in \, C^{1}_{-1}(a,b]$, and 
\begin{equation}
(D^{(K),*}_{b-,g} f)(x) \, = \, 
Q_{g} \, (D^{(K),*}_{g(b)-} \, Q^{-1}_g \, f)(x) ,
\end{equation}
if $f(x) \, \in \, C^{1}_{-1,g}[a,b)$, i.e. 
$f(g^{-1}(x)) \, \in \, C^{1}_{-1}[a,b)$. 
\end{Proposition}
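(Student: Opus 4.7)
The plan is to reduce the Caputo-type identity to the already established Proposition \ref{PGFI-GFIab} for parametric GFIs and the operator identity of Proposition \ref{Property-Q}, rather than redoing the change-of-variables computation from scratch. The key observation is that the Caputo-type parametric GFD admits the representation $(D^{(K),*}_{a+,g} f)(x) = (I^{(K)}_{a+,g} \, h)(x)$ with $h(u) = \frac{1}{g^{(1)}(u)} \frac{d}{du} f(u)$, which was recorded right after its definition. So once I pass $h$ through Proposition \ref{Property-Q} and then the GFI through Proposition \ref{PGFI-GFIab}, everything collapses.

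Concretely, for the left-sided case I would first write $f = Q_g Q_g^{-1} f$ and apply Proposition \ref{Property-Q} with $F = Q_g^{-1} f$ to get
\begin{equation*}
\frac{1}{g^{(1)}(u)} \frac{d}{du} f(u) \, = \, \frac{1}{g^{(1)}(u)} \frac{d}{du} Q_g (Q_g^{-1} f)(u) \, = \, Q_g \, (Q_g^{-1} f)^{(1)}(u).
\end{equation*}
Next I would insert this into the representation of the parametric Caputo GFD and apply Proposition \ref{PGFI-GFIab} to the outer parametric GFI, obtaining
\begin{equation*}
(D^{(K),*}_{a+,g} f)(x) \, = \, (I^{(K)}_{a+,g} \, Q_g (Q_g^{-1} f)^{(1)})(x) \, = \, Q_g \, (I^{(K)}_{g(a)+} \, Q_g^{-1} Q_g (Q_g^{-1} f)^{(1)})(x),
\end{equation*}
and then cancel the inner $Q_g^{-1} Q_g$ to leave $Q_g \, (I^{(K)}_{g(a)+} (Q_g^{-1} f)^{(1)})(x)$. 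Finally I would recognize the bracketed operator as exactly the standard Caputo-type GFD $D^{(K),*}_{g(a)+}$ acting on $Q_g^{-1} f$, which yields the claimed identity.

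The right-sided case follows the same template: the minus sign already built into the definition of $D^{(K),*}_{b-,g}$ is preserved through the substitution (as in the right-sided half of Proposition \ref{PGFI-GFIab}), and the cancellation of $Q_g^{-1} Q_g$ again reduces the expression to $Q_g (D^{(K),*}_{g(b)-} Q_g^{-1} f)(x)$. No separate computation of the integral transform is required.

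The only real care needed is checking that the function-class hypotheses propagate correctly, i.e. verifying that $f \in C^1_{-1,g}(a,b]$ is equivalent to $Q_g^{-1} f \in C^1_{-1}(g(a), g(b)]$, so that Proposition \ref{PGFI-GFIab} and the standard Caputo GFD on the transformed interval are both applicable. This is essentially the definition of $C^1_{-1,g}$ in Section 2.1 together with the fact that $g$ and $g^{-1}$ are $C^1$ with nonvanishing derivative on the closed interval; I expect this to be the most delicate (though still routine) step, while the algebraic manipulations are forced by the earlier propositions.
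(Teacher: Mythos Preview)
Your proposal is correct and follows essentially the same approach as the paper: both proofs express $D^{(K),*}_{a+,g} f$ as $I^{(K)}_{a+,g}$ applied to $\frac{1}{g^{(1)}}\frac{d}{du} f$, then invoke Proposition~\ref{PGFI-GFIab} to conjugate the GFI and Proposition~\ref{Property-Q} to handle the derivative, cancel the intermediate $Q_g^{-1}Q_g$, and recognise $I^{(K)}_{g(a)+}\frac{d}{du}$ as $D^{(K),*}_{g(a)+}$. The only cosmetic difference is that the paper first conjugates the outer GFI and then inserts $Q_gQ_g^{-1}$ before applying Proposition~\ref{Property-Q}, whereas you apply Proposition~\ref{Property-Q} first and conjugate second; the algebra is identical and your extra remark about the function-class compatibility is a useful clarification the paper leaves implicit.
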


%%% PROOF for Caputo
\begin{proof}
Let us prove the inequality of the left-sided parametric GFD.

Using the definition of the left-sided parametric GFD of the Caputo type and the substitution operator
\[
(D^{(K),*}_{a+,g} \, f)(x) \, = \, 
\left(I^{(K)}_{a+,g} \, 
\left(\frac{1}{g^{(1)}(u)} \frac{d}{du} \right)\, f(u) \right)(x) \, = \, 
\]
\[
\left(Q_g \, I^{(K)}_{g(a)+} \, Q^{-1}_g
\left(\frac{1}{g^{(1)}(u)} \frac{d}{du} \right) \, 
f(u) \right)(x) \, = \, 
\]
\[
\left(Q_g \, I^{(K)}_{g(a)+} \, Q^{-1}_g
\left(\frac{1}{g^{(1)}(u)} \frac{d}{du} \right) \, 
Q_g \, Q^{-1}_g \, f(u) \right)(x) \, = \, 
\]
\[
\left(Q_g \, I^{(K)}_{g(a)+} \, Q^{-1}_g \, Q_g \,
\left( \frac{d}{du} \right) Q^{-1}_g \, f(u) \right)(x) \, = \, 
\]
\[
\left(Q_g \, I^{(K)}_{g(a)+} \, \left( \frac{d}{du} \right) \, Q^{-1}_g \, f(u) \right)(x) \, = \, 
\]
\[
\left(Q_g \, D^{(K),*}_{g(a)+} \, Q^{-1}_g \, f \right)(x) ,
\]
which was to be proved. 

The property of the right-sided GFD of the Caputo type is proved similarly.
Q.E.D.
\end{proof}

The parametric GFD of the RL type can be writted via 
the parametric GFD of the Caputo type.

\begin{Proposition}[Relation between parametric GFDs of RL and Caputo types] 
\label{RL-Caputo}

Let kernel pair $(M(x), \, K(x))$ belongs to the Luchko set ${\cal L}_{1}(\mathbb{R}_{+})$.
Let $g(x)$ be an increasing and positive monotone function on $[a, b)$ 
with $ - \, \infty \, < \, a \, < \, b \, < \, +\infty$, 
having a nonzero continuous first-order derivative $g^{(1)}(x) \, \ne \, 0$ for all $(a,b)$.

Then, the left-sided and right-sided parametric GFDs of the Riemann-Liouville and Caputo types satisfy the equalities
\begin{equation}
(D^{(K)}_{a+,g} \, f)(x) \, = \, 
(D^{(K),*}_{a+,g} \, f)(x) \, + \, 
K(g(x) \, - \, g(a)) \, f(g(a)) , 
\end{equation}
if $f(x) \, \in \, C^{1}_{-1,g}(a,b]$, i.e. 
$f(g^{-1}(x)) \, \in \, C^{1}_{-1}(a,b]$, and 
\begin{equation}
(D^{(K)}_{b-,g} \, f)(x) \, = \, 
(D^{(K),*}_{b-,g} \, f)(x) \, + \, 
K(g(b) \, - \, g(x)) \, f(g(b)) , 
\end{equation}
if $f(x) \, \in \, C^{1}_{-1,g}[a,b)$, i.e. 
$f(g^{-1}(x)) \, \in \, C^{1}_{-1}[a,b)$. 
\end{Proposition}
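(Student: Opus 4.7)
The plan is to reduce Proposition \ref{RL-Caputo} to the analogous, non-parametric relation between RL and Caputo GFDs on a finite interval, which is already established in Luchko's paper \cite{Luchko2023}. Recall that for a kernel pair in $\mathcal{L}_1(\mathbb{R}_+)$ and for $h \in C^1_{-1}(c, d]$, Luchko proves
\[
(D^{(K)}_{c+} h)(y) \, = \, (D^{(K),*}_{c+} h)(y) \, + \, K(y - c) \, h(c).
\]
The strategy is to apply this identity with $c = g(a)$, $d = g(b)$, and $h = Q_g^{-1} f$, then pull the resulting equation back through the substitution operator $Q_g$ using the conjugation formulas of Propositions \ref{D-QDQ-RL} and \ref{D-QDQ-C}.

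More concretely, first I would verify that the hypothesis $f \in C^1_{-1,g}(a,b]$ implies $h := Q_g^{-1} f \in C^1_{-1}(g(a), g(b)]$. This is essentially built into the definitions: the defining factor $(g(x)-g(a))^p$ becomes $(y-g(a))^p$ under the change of variable $y = g(x)$, and Proposition \ref{Property-Q} together with $g^{(1)} \ne 0$ transports the $C^1$ regularity correctly. Once this is checked, Luchko's identity is available for $h$ on $(g(a), g(b)]$.

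Next, I would apply $Q_g$ to both sides of Luchko's identity with argument $y = g(x)$. By Proposition \ref{D-QDQ-RL}, $Q_g (D^{(K)}_{g(a)+} Q_g^{-1} f)(x) = (D^{(K)}_{a+,g} f)(x)$, and by Proposition \ref{D-QDQ-C}, $Q_g (D^{(K),*}_{g(a)+} Q_g^{-1} f)(x) = (D^{(K),*}_{a+,g} f)(x)$. The boundary term transforms as
\[
Q_g \bigl[ K(y - g(a)) \, h(g(a)) \bigr](x) \, = \, K(g(x) - g(a)) \, (Q_g^{-1} f)(g(a)),
\]
and $(Q_g^{-1} f)(g(a)) = f(g^{-1}(g(a))) = f(a)$, yielding the stated boundary correction. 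The right-sided case is the mirror argument, replacing $g(a)$ by $g(b)$, using the minus sign built into the right-sided parametric GFDs, and applying the right-sided analogues of Propositions \ref{D-QDQ-RL} and \ref{D-QDQ-C}.

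The main obstacle is the bookkeeping of function classes under conjugation by $Q_g$: one must confirm that the transported function $Q_g^{-1} f$ actually satisfies the Luchko-$C^1_{-1}$ condition on $(g(a), g(b)]$ with a compatible exponent $p$, so that the cited identity from \cite{Luchko2023} truly applies. Given the definitions of $C^1_{-1,g}$ in Section 2 and the monotonicity plus differentiability assumptions on $g$, this is a routine verification, after which the remaining algebra collapses by invoking Propositions \ref{D-QDQ-RL} and \ref{D-QDQ-C}. A fully direct proof bypassing conjugation is possible via integration by parts in the definition \eqref{PGFD-RL-l} of $D^{(K)}_{a+,g}$, but the conjugation route is shorter and aligns with the paper's systematic reduction to the non-parametric case.
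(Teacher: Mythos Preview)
Your approach is essentially identical to the paper's: both start from the conjugation formula of Proposition~\ref{D-QDQ-RL}, apply Luchko's non-parametric RL--Caputo relation (Proposition~4 of \cite{Luchko2023}) to $Q_g^{-1}f$ on $(g(a),g(b)]$, and then pull back through $Q_g$ using Proposition~\ref{D-QDQ-C}. One point worth flagging: your computation of the boundary term gives $(Q_g^{-1}f)(g(a)) = f(g^{-1}(g(a))) = f(a)$, whereas the paper's statement records $f(g(a))$; your value is what the conjugation argument actually produces, so this discrepancy appears to be a typo in the paper rather than an error in your proposal.
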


\begin{proof}
Let us prove the inequality of the left-sided parametric GFD.

Using Proposition \ref{D-QDQ-RL} of this paper and Proposition 4 of Luchko paper \cite{Luchko2023}, one can get
\[
(D^{(K)}_{a+,g} \, f)(x) \, = \, 
Q_g \, \left(D^{(K)}_{g(a)+} \, Q^{-1}_g \, f \right)(x) \, = \, 
\]
\[
Q_g \, \left( 
(D^{(K),*}_{g(a)+} \, Q^{-1}_g \, f)(x) 
\, + \, K(x \, - \, g(a)) \, (Q^{-1}_g \, f)(g(a))
\right) \, = \, 
\]
\[
Q_g \, \left(D^{(K),*}_{g(a)+} \, Q^{-1}_g \, f \right)(x) \, + \, 
Q_g \, \left( K(x \, - \, g(a)) \, (Q^{-1}_g \, f)(g(a) \right) 
\, = \, 
\]
\[
(D^{(K),*}_{a+,g} f)(x)\, + \, 
K(g(x) \, - \, g(a)) \, f(g(a) ,
\]
which was to be proved. 

For the right-sided GFD, the identity is proved similarly.
Q.E.D.
\end{proof}

%%%%%%%%%%%%%%%%%%%%%%%%%%%%%%%%%%%%%%%%%%%%%%%%%%%%%%%%%%%%%
%%%%%%%%%%%%%%%%%%%%%%%%%%%%%%%%%%%%%%%%%%%%%%%%%%%%%%%%%%%%%
%%%%%%%%%%%%%%%%%%%%%%%%%%%%%%%%%%%%%%%%%%%%%%%%%%%%%%%%%%%%%

%%% \newpage

%%%%%%%%%%%%%%%%%%%%%%%%%%%%%%%%%%%%%%%%%%%%%%%%%%%%%%%%%%%%%

\section{Fundamental Theorems of Parametric GFC}

%%%%%%%%%%%%%%%%%%%%%%%%%%%%%%%%%%%%%%%%%%%%%%%%%%%%%%%%%%%%%

The properties of GFDs of $f(x)$ with respect to $g(x)$
can be proved by using the properties of GFIs of $f(x)$ with respect to $g(x)$.

Using the properties of the parametric GFIs in form of Proposition \ref{PGFI-GFIab} and the properties of the substitution operators $Q_g$, $Q^{-1}_g$, one can prove the following representations.

The left-sided and right-sided parametric GFDs of the Riemann-Liouville type can be written as
\[
(D^{(K)}_{a+,g} f)(x) \, = \, 
\frac{1}{g^{(1)}(x)} \frac{d}{dx} \int^x_a K(g(x) \, - \, g(u)) \, f(u) \, g^{(1)}(u) \, du 
\, = \, 
\]
\begin{equation}
\frac{1}{g^{(1)}(x)} \frac{d}{dx} (I^{(K)}_{a+,g} f)(x) \, = \, 
\frac{1}{g^{(1)}(x)} \frac{d}{dx} (Q_g \, I^{(K)}_{g(a)+} \, Q^{-1}_g\, f)(x) ,
\end{equation}
where $x \, > \, a$, and
\[
(D^{(K)}_{b-,g} f)(x) \, = \, 
\frac{1}{g^{(1)}(x)} \frac{d}{dx} \int^b_x K(g(u) \, - \, g(x)) \, f(u) \, g^{(1)}(u) \, du \, = \, 
\]
\begin{equation}
\frac{1}{g^{(1)}(x)} \frac{d}{dx} (I^{(K)}_{b-,g} f)(x) \, = \, 
\frac{1}{g^{(1)}(x)} \frac{d}{dx} (Q_g \, I^{(K)}_{g(b)-} \, Q^{-1}_g\, f)(x) ,
\end{equation}
where $x \, < \, b$.

The left-sided parametric GFD of the Caputo type can be written as
\[
(D^{(K),*}_{a+,g} f)(x) \, = \, 
\int^x_a K(g(x) \, - \, g(u)) \, f^{(1)}(u) \, du \, = \,
\]
\[
\int^x_a K(g(x) \, - \, g(u)) \, g^{(1)}(u) \left( \frac{1}{g^{(1)}(u)} \frac{d}{du} f(u) \right) \, du 
\, = \, \frac{1}{g^{(1)}(x)} \frac{d}{dx} (I^{(K)}_{a+,g} f)(x) \, = \, 
\]
\begin{equation}
\left( I^{(K)}_{a+,g} \left( \frac{1}{g^{(1)}(u)} f^{(1)}(u) \right)\right)(x) \, = \, 
Q_{g} \, \left( I^{(K)}_{g(a)+} Q^{-1}_g \, \left( \frac{1}{g^{(1)}(u)} f^{(1)}(u) \right) \right)(x) ,
\end{equation}
where $x \, > \, a$, and
\[
(D^{(K),*}_{b-,g} f)(x) \, = \, 
\int^b_x K(g(u) \, - \, g(x)) \, f^{(1)}(u) \, du \, = \,
\]
\[
\int^b_x K(g(u) \, - \, g(x)) \, g^{(1)}(u) \left( \frac{1}{g^{(1)}(u)} \frac{d}{du} f(u) \right) \, du 
\, = \, \frac{1}{g^{(1)}(x)} \frac{d}{dx} (I^{(K)}_{b-,g} f)(x) \, = \, 
\]
\begin{equation}
\left( I^{(K)}_{b-,g} \left( \frac{1}{g^{(1)}(u)} f^{(1)}(u) \right)\right)(x) \, = \, 
Q_{g} \, \left( I^{(K)}_{g(b)-} Q^{-1}_g \, 
\left( \frac{1}{g^{(1)}(u)} f^{(1)}(u) \right) \right)(x) ,
\end{equation}
where $x \, < \, b$.

The right-sided parametric GFDs of the Riemann-Liouville and Caputo types are defined 

%%%%%%%%%%%%%%%%%%%%%%%%%%%%%%%%%%%%%%%%%%%%%%%%%%%%%%%%%%%%%
%%%%%%%%%%%%%%%%%%%%%%%%%%%%%%%%%%%%%%%%%%%%%%%%%%%%%%%%%%%%%
%%%%%%%%%%%%%%%%%%%%%%%%%%%%%%%%%%%%%%%%%%%%%%%%%%%%%%%%%%%%%

\subsection{First Fundamental Theorems of Parametric GFC}

%%%%%%%%%%%%%%%%%%%%%%%%%%%%%%%%%%%%%%%%%%%%%%%%%%%%%%%%%%%%%

Let us prove the first fundamental theorems of parametric GFC for the GFDs of Riemann-Liouville types.
Note that the condition 
$f(x) \, \in \, C^{1}_{-1,g}(\Omega)$ 
means that
$f(g^{-1}(x)) \, \in \, C^{1}_{-1}(\Omega)$
for finite interval $\Omega \subset \mathbb{R}$.

\begin{Theorem}[First fundamental theorem for the parametric GFD of the Riemann-Liouville type]
\label{FT-1-RL} 

Let kernel pair $(M(x), \, K(x))$ belongs to the Luchko set ${\cal L}_{1}(\mathbb{R}_{+})$ and
$ - \, \infty \, < \, a \, < \, b \, < \, +\infty$.

Then, the parametric GFI and the parametric GFD of the RL type satisfy the equalities
\[
(D^{(K)}_{a+,g} \, I^{(M)}_{a+,g} f)(x) \, = \, f(x) ,
\]
if $f(x) \, \in \, C^{1}_{-1,g}(a,b]$ 
and $g(x) \, \in \, {\cal G}(a,b]$, and 
\[
(D^{(K)}_{b-,g} \, I^{(M)}_{b-,g} f)(x) \, = \, f(x) ,
\]
if $f(x) \, \in \, C^{1}_{-1,g}[a,b)$ 
and $g(x) \, \in \, {\cal G}[a,b)$.
\end{Theorem}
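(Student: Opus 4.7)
The plan is to reduce the parametric fundamental theorem to the non-parametric version already proved on the interval $(g(a),g(b))$ in the Luchko paper \cite{Luchko2023}, using the two representation propositions established earlier (Propositions \ref{PGFI-GFIab} and \ref{D-QDQ-RL}) as the bridge.

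First I would handle the left-sided case. By Proposition \ref{PGFI-GFIab},
\[
(I^{(M)}_{a+,g} f)(x) \, = \, Q_g \, (I^{(M)}_{g(a)+} \, Q^{-1}_g f)(x),
\]
and by Proposition \ref{D-QDQ-RL} applied to the function $h = I^{(M)}_{a+,g} f$,
\[
(D^{(K)}_{a+,g} h)(x) \, = \, Q_g \, (D^{(K)}_{g(a)+} \, Q^{-1}_g h)(x).
\]
Plugging in and using $Q^{-1}_g Q_g = \mathrm{Id}$, these compose to
\[
(D^{(K)}_{a+,g} \, I^{(M)}_{a+,g} f)(x) \, = \, Q_g \, \bigl( D^{(K)}_{g(a)+} \, I^{(M)}_{g(a)+} \, Q^{-1}_g f \bigr)(x).
\]
At this point the non-parametric first fundamental theorem on the finite interval $(g(a),g(b))$, proved in \cite{Luchko2023}, yields $D^{(K)}_{g(a)+} I^{(M)}_{g(a)+} = \mathrm{Id}$, and applying $Q_g Q^{-1}_g = \mathrm{Id}$ returns $f(x)$.

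Before this last step is legal I must check the hypothesis of the underlying Luchko theorem, namely that $Q^{-1}_g f \in C^{1}_{-1}(g(a),g(b)]$. This is exactly the content of the assumption $f \in C^{1}_{-1,g}(a,b]$ as unpacked in the statement of Proposition \ref{D-QDQ-RL}, since under the change of variable $y = g(x)$ the representation $(df/dx) = (g(x)-g(a))^p v_p(x)$ with $v_p \in C[a,b]$ transforms into a representation of the same form for $Q^{-1}_g f$ on $[g(a),g(b)]$; the continuity and non-vanishing of $g^{(1)}$ ensured by $g \in \mathcal{G}(a,b]$ guarantees that $g^{-1}$ is $C^1$, so the $C$-class of the multiplier is preserved. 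Verifying this space-matching is the only non-mechanical step and is where I expect the main care to be needed.

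The right-sided case proceeds identically: use Propositions \ref{PGFI-GFIab} and \ref{D-QDQ-RL} with the substitution operator corresponding to $g$ on $[a,b)$ to obtain
\[
(D^{(K)}_{b-,g} \, I^{(M)}_{b-,g} f)(x) \, = \, Q_g \, \bigl( D^{(K)}_{g(b)-} \, I^{(M)}_{g(b)-} \, Q^{-1}_g f \bigr)(x),
\]
then invoke the right-sided first fundamental theorem from \cite{Luchko2023} and the invertibility of $Q_g$. The assumption $f \in C^{1}_{-1,g}[a,b)$ with $g \in \mathcal{G}[a,b)$ supplies the hypothesis $Q^{-1}_g f \in C^{1}_{-1}[g(a),g(b))$ needed there. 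In both cases the strategy is pure conjugation by $Q_g$, so the proof is essentially a two-line diagram chase once the function-space bookkeeping is in place.
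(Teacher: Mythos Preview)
Your proposal is correct and follows essentially the same conjugation-by-$Q_g$ strategy as the paper. The only cosmetic difference is that you invoke Proposition~\ref{D-QDQ-RL} as a black box for the derivative side, whereas the paper unfolds the definition of $D^{(K)}_{a+,g}$ and reuses Proposition~\ref{PGFI-GFIab} together with the commutation identity \eqref{Property-DQ-QD} to reach the same expression $Q_g\bigl(D^{(K)}_{g(a)+} I^{(M)}_{g(a)+} Q^{-1}_g f\bigr)(x)$ before applying the non-parametric first fundamental theorem from \cite{Luchko2023}.
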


%%% PROOF 1-FT-RL
\begin{proof}
Using the definition of the GFD of the RL type and the substitution operator
\[
(D^{(K)}_{a+,g} \, I^{(M)}_{a+,g} f)(x) \, = \, 
\left( \left(\frac{1}{g^{(1)}(x)} \frac{d}{dx} \right) \, 
I^{(K)}_{a+,g} \,
\left( I^{(M)}_{a+,g} \,f \right) \right)(x) \, = \, 
\]
\[
\left( \left(\frac{1}{g^{(1)}(x)} \frac{d}{dx} \right)
\, Q_g \, I^{(K)}_{g(a)+} \, Q^{-1}_g \, Q_g \, 
\left( I^{(M)}_{g(a)+} \, Q^{-1}_g \, f \right) \right)(x) 
\, = \, 
\]
\[
\left( \left(\frac{1}{g^{(1)}(x)} \frac{d}{dx} \right) 
\, Q_g \,I^{(K)}_{g(a)+} \,
\left( I^{(M)}_{g(a)+} \, Q^{-1}_g \, f \right) \right)(x) 
\, = \, 
\]
\[
\left( 
Q_g \, \left(\frac{d}{dx} \right) \, I^{(K)}_{g(a)+} \, 
\left( I^{(M)}_{g(a)+} \, Q^{-1}_g \, f \right) \right)(x) 
\, = \, 
\]
\[
Q_g \, \left( D^{(K)}_{g(a)+} \, 
\left( I^{(M)}_{g(a)+} \, Q^{-1}_g \, f \right) \right)(x) .
\]
Using the first FT of GFC \cite{Luchko2023}, we get
\[
(D^{(K)}_{a+,g} \, I^{(M)}_{a+,g} f)(x) \, = \, 
Q_g \, Q^{-1}_g \, f(x) \, = \, f(x) ,
\]
which was to be proved. 

For the right-sided parametric GFD, the identity is proved similarly.
Q.E.D.
\end{proof}

For the parametric GFD of the Caputo type and parametric GFIs, 
the following statement holds.

\begin{Theorem}[First fundamental theorem for the GFDs of the Caputo type]
\label{FT-1-C}

Let kernel pair $(M(x), \, K(x))$ belongs to the Luchko set ${\cal L}_{1}(\mathbb{R}_{+})$ and
$ - \, \infty \, < \, a \, < \, b \, < \, +\infty$.

Then, the parametric GFI and the parametric GFD of the Caputo type satisfy the equalities
\[
(D^{(K),*}_{a+,g} \, I^{(M)}_{a+,g} f)(x) \, = \, f(x) ,
\]
if $f(x) \, \in \, C^{1}_{-1,g}(a,b]$ 
and $g(x) \, \in \, {\cal G}(a,b]$, and 
\[
(D^{(K),*}_{b-,g} \, I^{(M)}_{b-,g} f)(x) \, = \, f(x) ,
\]
if $f(x) \, \in \, C^{1}_{-1,g}[a,b)$ 
and $g(x) \, \in \, {\cal G}[a,b)$. 
\end{Theorem}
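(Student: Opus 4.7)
The plan is to mirror the proof of Theorem \ref{FT-1-RL}, but replacing the role of Proposition \ref{D-QDQ-RL} with Proposition \ref{D-QDQ-C} for the Caputo-type operator, and then invoking the first fundamental theorem for the Caputo-type GFD on the finite interval established by Luchko in \cite{Luchko2023}.

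Concretely, I would start with the left-sided case. Set $F(x) = (I^{(M)}_{a+,g} f)(x)$. By Proposition \ref{PGFI-GFIab},
\[
F(x) \, = \, (Q_g \, I^{(M)}_{g(a)+} \, Q^{-1}_g \, f)(x) .
\]
Next, I would apply Proposition \ref{D-QDQ-C} to the outer operator to obtain
\[
(D^{(K),*}_{a+,g} \, F)(x) \, = \, (Q_g \, D^{(K),*}_{g(a)+} \, Q^{-1}_g \, F)(x) .
\]
Substituting the expression for $F$ and using $Q^{-1}_g \, Q_g = \mathrm{Id}$ causes the inner substitution pair to cancel, leaving
\[
(D^{(K),*}_{a+,g} \, I^{(M)}_{a+,g} f)(x) \, = \, (Q_g \, D^{(K),*}_{g(a)+} \, I^{(M)}_{g(a)+} \, Q^{-1}_g \, f)(x) .
\]
At this point I would apply the first fundamental theorem for the Caputo-type GFD on the finite interval $(g(a), g(b)]$ from \cite{Luchko2023} to the function $Q^{-1}_g \, f$, yielding $D^{(K),*}_{g(a)+} \, I^{(M)}_{g(a)+} \, (Q^{-1}_g f) = Q^{-1}_g f$. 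Then $Q_g \, Q^{-1}_g = \mathrm{Id}$ finishes the identity $(D^{(K),*}_{a+,g} \, I^{(M)}_{a+,g} f)(x) = f(x)$.

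The right-sided case is proved in the same manner, using the right-sided halves of Propositions \ref{PGFI-GFIab} and \ref{D-QDQ-C} together with the right-sided version of the Luchko fundamental theorem on $[g(a), g(b))$.

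The only real obstacle is verifying that the hypotheses of the Luchko fundamental theorem in \cite{Luchko2023} are met by the transformed function $Q^{-1}_g f$. This is a regularity check: the assumption $f \in C^{1}_{-1,g}(a,b]$ is defined in Section 2.1 precisely so that $Q^{-1}_g f \in C^{1}_{-1}(g(a), g(b)]$, since differentiation of $f \circ g^{-1}$ produces a factor $1/g^{(1)}$ which is continuous and nonzero thanks to $g \in \mathcal{G}(a,b]$; the weight $(g(x)-g(a))^p$ transforms into $(y - g(a))^p$ under $y = g(x)$. Once this identification is spelled out, the hypothesis transfer is automatic and the chain of operator identities above closes. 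Q.E.D.
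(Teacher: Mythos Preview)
Your proposal is correct and follows essentially the same approach as the paper: both arguments conjugate by $Q_g$ to reduce the parametric identity to the corresponding first fundamental theorem on the finite interval $(g(a),g(b)]$ from \cite{Luchko2023}, applied to $Q^{-1}_g f$. The only cosmetic difference is that you invoke Proposition \ref{D-QDQ-C} as a black box, whereas the paper unwinds the definition of $D^{(K),*}_{a+,g}$ and re-derives that conjugation inline using Proposition \ref{Property-Q}; the underlying manipulations are identical.
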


%%% PROOF 1-FT-C
\begin{proof}
Using the definition of the GFD of the Caputo type and the substitution operator
\[
(D^{(K),*}_{a+,g} \, I^{(M)}_{a+,g} f)(x) \, = \, 
\left(I^{(K)}_{a+,g} \, 
\left(\frac{1}{g^{(1)}(u)} \frac{d}{du} \right)\, (I^{(M)}_{a+,g} f)(u) \right)(x) \, = \, 
\]
\[
\left(Q_g \, I^{(K)}_{g(a)+} \, Q^{-1}_g
\left(\frac{1}{g^{(1)}(u)} \frac{d}{du} \right) \, 
Q_g \, (I^{(M)}_{g(a)+} Q^{-1}_g \, f)(u) \right)(x) .
\]
Using property \eqref{Property-DQ-QD} and the first FT of GFC \cite{Luchko2023}, we get
\[
(D^{(K),*}_{a+,g} \, I^{(M)}_{a+,g} f)(x) \, = \, 
\left(Q_g \, I^{(K)}_{g(a)+} \, Q^{-1}_g \, Q_g \,
\left( \frac{d}{du} \right) (I^{(M)}_{g(a)+} Q^{-1}_g \, f)(u) \right)(x) \, = \, 
\]
\[
\left(Q_g \, I^{(K)}_{g(a)+} \, 
\left( \frac{d}{du} \right) (I^{(M)}_{g(a)+} Q^{-1}_g \, f)(u) \right)(x) \, = \, 
\]
\[
\left(Q_g \, D^{(K),*}_{g(a)+} \,
(I^{(M)}_{g(a)+} Q^{-1}_g \, f) \right)(x) \, = \, 
Q_g \, Q^{-1}_g \, f(x) \, = \, f(x) ,
\]
which was to be proved. 

For the right-sided parametric GFD, the identity is proved similarly.
Q.E.D.
\end{proof}

%%%%%%%%%%%%%%%%%%%%%%%%%%%%%%%%%%%%%%%%%%%%%%%%%%%%%%%%%%%%%
%%%%%%%%%%%%%%%%%%%%%%%%%%%%%%%%%%%%%%%%%%%%%%%%%%%%%%%%%%%%%
%%%%%%%%%%%%%%%%%%%%%%%%%%%%%%%%%%%%%%%%%%%%%%%%%%%%%%%%%%%%%

\subsection{Second Fundamental Theorems of Parametric GFC}

%%%%%%%%%%%%%%%%%%%%%%%%%%%%%%%%%%%%%%%%%%%%%%%%%%%%%%%%%%%%%

Let us prove the second fundamental theorems of parametric GFC for the GFDs of Riemann-Liouville types.

\begin{Theorem}[Second fundamental theorem for the GFDs of the Riemann-Liouville type]
\label{FT-2-RL} 

Let kernel pair $(M(x), \, K(x))$ belongs to the Luchko set ${\cal L}_{1}(\mathbb{R}_{+})$ and 
$ - \, \infty \, < \, a \, < \, b \, < \, +\infty$. 

Then, the parametric GFI and the parametric GFD of the Riemann-Liouville type satisfy the equalities
\[
(I^{(M)}_{a+,g} \, D^{(K)}_{a+,g} f)(x) \, = \, f(x) ,
\]
if $f(x) \, \in \, C^{1}_{-1,g}(a,b]$ 
and $g(x) \, \in \, {\cal G}(a,b]$, and 
\[
(I^{(M)}_{b-,g} \, D^{(K)}_{b-,g} f)(x) \, = \, f(x) ,
\]
if $f(x) \, \in \, C^{1}_{-1,g}[a,b)$ 
and $g(x) \, \in \, {\cal G}[a,b)$.
\end{Theorem}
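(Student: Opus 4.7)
The plan is to mirror exactly the strategy used in the proof of Theorem \ref{FT-1-RL}, reducing the parametric statement on $(a,b)$ to Luchko's second fundamental theorem for GFC on the interval $(g(a),g(b))$ via conjugation by the substitution operator $Q_g$. The three ingredients already in hand are: the representation of the parametric GFI through $Q_g$ (Proposition \ref{PGFI-GFIab}), the analogous representation of the parametric GFD of Riemann--Liouville type (Proposition \ref{D-QDQ-RL}), and the second fundamental theorem of GFC on a finite interval due to Al-Refai and Luchko \cite{Luchko2023}.

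First, I would compose the two representations and collapse $Q^{-1}_g Q_g = \mathrm{id}$ in the middle:
\[
(I^{(M)}_{a+,g}\, D^{(K)}_{a+,g} f)(x) \;=\; \bigl(Q_g\, I^{(M)}_{g(a)+}\, Q^{-1}_g\, Q_g\, D^{(K)}_{g(a)+}\, Q^{-1}_g\, f\bigr)(x) \;=\; \bigl(Q_g\, I^{(M)}_{g(a)+}\, D^{(K)}_{g(a)+}\, Q^{-1}_g\, f\bigr)(x).
\]
Second, I would check the hypotheses needed to apply Luchko's second FT to the inner object $Q^{-1}_g f$, namely that $Q^{-1}_g f \in C^{1}_{-1}(g(a),g(b)]$. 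This is where the assumption $g \in \mathcal{G}(a,b]$ does the actual work: since $g$ is increasing with continuous nonvanishing $g^{(1)}$, its inverse $g^{-1}$ is continuously differentiable on $[g(a),g(b)]$, so the class $C^{1}_{-1,g}(a,b]$ pulls back through $Q^{-1}_g$ to $C^{1}_{-1}(g(a),g(b)]$ (the boundary behaviour $(g(x)-g(a))^{p}$ of $f$ becomes $(y-g(a))^{p}$ in the new variable $y=g(x)$, and the chain-rule identity of Proposition \ref{Property-Q} translates the derivative condition).

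Third, with the hypotheses verified, the second fundamental theorem on $(g(a),g(b))$ yields $(I^{(M)}_{g(a)+}\, D^{(K)}_{g(a)+}\, Q^{-1}_g f)(x) = (Q^{-1}_g f)(x)$. Substituting back,
\[
(I^{(M)}_{a+,g}\, D^{(K)}_{a+,g} f)(x) \;=\; (Q_g\, Q^{-1}_g\, f)(x) \;=\; f(x),
\]
as required. The right-sided identity follows identically, replacing $g(a)$ by $g(b)$, left-sided operators by right-sided ones, and invoking the corresponding right-sided version in \cite{Luchko2023}.

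The main obstacle is the second step: the algebraic manipulation of the $Q_g$, $Q^{-1}_g$ cancellations is essentially formal once the representations are granted, so the real content lies in justifying that the hypotheses of Luchko's second FT transfer correctly under the substitution. In particular, one must be careful that the composition $Q^{-1}_g f$ has the required singular behaviour at the lower endpoint and that its derivative belongs to the appropriate weighted class; this is where the condition $g^{(1)}(x)\neq 0$ on $[a,b]$ and the continuity of $g^{(1)}$ are essential, since they guarantee both invertibility of $Q_g$ as an operator between the relevant function spaces and the validity of $\tfrac{1}{g^{(1)}(x)}\tfrac{d}{dx}\,Q_g = Q_g\,\tfrac{d}{dx}$ used to match the two notions of derivative.
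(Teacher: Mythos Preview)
Your proposal is correct and follows essentially the same route as the paper: both arguments conjugate by $Q_g$ to reduce the parametric identity on $(a,b)$ to Luchko's second fundamental theorem on $(g(a),g(b))$, then collapse $Q_g Q^{-1}_g$ to recover $f$. Your version is in fact slightly more streamlined, since you invoke Proposition~\ref{D-QDQ-RL} directly rather than re-expanding the definition of $D^{(K)}_{a+,g}$ and reapplying Proposition~\ref{Property-Q} inline as the paper does; you also make the hypothesis transfer $f\in C^{1}_{-1,g}(a,b]\Rightarrow Q^{-1}_g f\in C^{1}_{-1}(g(a),g(b)]$ explicit, which the paper leaves implicit.
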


%%% PROOF 2-FT-RL
\begin{proof}
Using the definition of the parametric GFD of the RL type and the substitution operator, we get
\[
(I^{(M)}_{a+,g} \, D^{(K)}_{a+,g} f)(x) \, = \, 
\left(I^{(M)}_{a+,g} \, \left(\frac{1}{g^{(1)}(u)} \frac{d}{du} \, (I^{(K)}_{a+,g} f \right) \right)(x) \, = \, 
\]
\[
\left(Q_g \, I^{(M)}_{g(a)+} \, Q^{-1}_g 
\left(\frac{1}{g^{(1)}(u)} \frac{d}{du} \right) \, 
Q_g \, (I^{(K)}_{g(a)+} \, Q^{-1}_g \, f \right)(x) . 
\]
Using property \eqref{Property-DQ-QD} and the second FT of GFC \cite{Luchko2023}, we get
\[
(I^{(M)}_{a+,g} \, D^{(K)}_{a+,g} f)(x) \, = \, 
\left(Q_g \, I^{(M)}_{g(a)+} \, Q^{-1}_g \, 
Q_g \, \frac{d}{du} \, (I^{(K)}_{g(a)+} \, Q^{-1}_g \, f \right)(u) \, = \, 
\]
\[
\left(Q_g \, I^{(M)}_{g(a)+} \, \frac{d}{du} \, 
\left(I^{(K)}_{g(a)+} \, Q^{-1}_g \, f \right) \right)(u) 
\, = \, 
\left(Q_g \, I^{(M)}_{g(a)+} \, 
\left( D^{(K)}_{g(a)+} \, Q^{-1}_g \, f \right) \right) (u) 
\, = \, 
\]
\[
Q_g \, \left( (Q^{-1}_g \, f)(x) \, - \, 
(Q^{-1}_g \, f)(x) \right) \, = \, f(x) ,
\]
which was to be proved. 

For the right-sided parametric GFD, the identity is proved similarly.
Q.E.D.
\end{proof}

For the parametric GFD of the Caputo type and parametric GFIs, 
the following statement holds.

\begin{Theorem}[Second fundamental theorem for the GFDs of the Caputo type] 
\label{FT-2-C}

Let kernel pair $(M(x), \, K(x))$ belongs to the Luchko set ${\cal L}_{1}(\mathbb{R}_{+})$ and
$ - \, \infty \, < \, a \, < \, b \, < \, +\infty$.

Then, the parametric GFI and the parametric GFD of the Caputo type satisfy the equality
\[
(I^{(M)}_{a+,g} \, D^{(K),*}_{a+,g} \, f)(x) 
\, = \, f(x) \, - \, f(a) ,
\]
if $f(x) \, \in \, C^{1}_{-1,g}(a,b]$ 
and $g(x) \, \in \, {\cal G}(a,b]$, and 
\[
(I^{(M)}_{b-,g} \, D^{(K),*}_{b-,g} \, f)(x) 
\, = \, f(b) \, - \, f(x) ,
\]
if $f(x) \, \in \, C^{1}_{-1,g}[a,b)$ 
and $g(x) \, \in \, {\cal G}[a,b)$.
\end{Theorem}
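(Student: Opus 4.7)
I would follow the pattern of the proof of Theorem \ref{FT-2-RL}, reducing the parametric identity to the second fundamental theorem for the Caputo-type GFD on the finite interval $(g(a),g(b))$ proved in \cite{Luchko2023}, via conjugation by the substitution operator $Q_g$.

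First, using Proposition \ref{PGFI-GFIab} to write $(I^{(M)}_{a+,g} h)(x) = Q_g(I^{(M)}_{g(a)+} Q^{-1}_g h)(x)$ and Proposition \ref{D-QDQ-C} to write $(D^{(K),*}_{a+,g} f)(x) = Q_g(D^{(K),*}_{g(a)+} Q^{-1}_g f)(x)$, I would compose the two operators and insert a trivial $Q^{-1}_g Q_g = \mathrm{id}$ between them, obtaining
\[
(I^{(M)}_{a+,g} D^{(K),*}_{a+,g} f)(x) = Q_g\bigl(I^{(M)}_{g(a)+} D^{(K),*}_{g(a)+} Q^{-1}_g f\bigr)(x).
\]

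Next, I would apply the left-sided second fundamental theorem for the Caputo-type GFD on $(g(a),g(b))$ from \cite{Luchko2023} to the function $Q^{-1}_g f$, which yields
\[
\bigl(I^{(M)}_{g(a)+} D^{(K),*}_{g(a)+} Q^{-1}_g f\bigr)(y) = (Q^{-1}_g f)(y) - (Q^{-1}_g f)(g(a)).
\]
Then I would apply the outer $Q_g$: the first term gives $Q_g Q^{-1}_g f = f(x)$, while the second term is the constant $(Q^{-1}_g f)(g(a)) = f(g^{-1}(g(a))) = f(a)$, which is left unchanged by $Q_g$. This produces $f(x) - f(a)$, as claimed. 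The right-sided identity is handled analogously with the right-endpoint substitution $Q_g$ and the right-sided second FT on $(g(a),g(b))$, whose boundary evaluation at $g(b)$ translates under $Q_g$ into $f(b) - f(x)$.

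The only point that requires genuine verification is that the regularity hypotheses transport correctly under $Q^{-1}_g$: one must check that $f \in C^{1}_{-1,g}(a,b]$ together with $g \in \mathcal{G}(a,b]$ entails $Q^{-1}_g f \in C^{1}_{-1}(g(a),g(b)]$, which is the precise hypothesis required by Luchko's second FT. This is immediate from the definition of $C^{1}_{-1,g}$ together with Proposition \ref{Property-Q}, since $g$ has a continuous nonvanishing first derivative on $(a,b)$ and so does $g^{-1}$ on $(g(a),g(b))$. Once the regularity is in place, the rest of the argument is purely operator-algebraic manipulation with no substantial obstacle.
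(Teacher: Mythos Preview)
Your proposal is correct and follows essentially the same conjugation approach as the paper: reduce to the non-parametric second fundamental theorem on $(g(a),g(b))$ from \cite{Luchko2023} via $Q_g$. The only cosmetic difference is that you invoke Proposition~\ref{D-QDQ-C} directly to write $D^{(K),*}_{a+,g} = Q_g D^{(K),*}_{g(a)+} Q^{-1}_g$, whereas the paper expands $D^{(K),*}_{a+,g}$ as $I^{(K)}_{a+,g}\bigl(\tfrac{1}{g^{(1)}}\tfrac{d}{du}\bigr)$ and re-derives that conjugation inline using Proposition~\ref{Property-Q}; the resulting computation is the same.
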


%%% PROOF 2-FT-C
\begin{proof}
Using the definition of the parametric GFD of the Caputo type and 
property \eqref{Property-DQ-QD} of the substitution operator, we get
\[
(I^{(M)}_{a+,g} \, D^{(K),*}_{a+,g} \, f)(x) \, = \, 
\left( I^{(M)}_{a+,g} \,
I^{(K)}_{a+,g} \, 
\left(\frac{1}{g^{(1)}(u)} \frac{d}{du} \right)\, 
f(u) \right)(x) \, = \, 
\]
\[
\left( Q_g \, I^{(M)}_{g(a)+} \, Q^{-1}_g \, Q_g \, 
I^{(K)}_{g(a)+} \, Q^{-1}_g \, 
\left(\frac{1}{g^{(1)}(u)} \frac{d}{du} \right)\, 
f(u) \right)(x) \, = \, 
\]
\[
\left( Q_g \, I^{(M)}_{g(a)+} \, 
I^{(K)}_{g(a)+} \, Q^{-1}_g \, 
\left(\frac{1}{g^{(1)}(u)} \frac{d}{du} \right)\, 
f(u) \right)(x) \, = \, 
\]
\[
\left( Q_g \, I^{(M)}_{g(a)+} \, 
I^{(K)}_{g(a)+} \, Q^{-1}_g \, 
\left(\frac{1}{g^{(1)}(u)} \frac{d}{du} \right)
\, Q_g \, Q^{-1}_g \, 
f(u) \right)(x) \, = \, 
\]
\[
\left( Q_g \, I^{(M)}_{g(a)+} \, 
I^{(K)}_{g(a)+} \, Q^{-1}_g \, Q_g \, 
\left(\frac{d}{du} \right)\, Q^{-1}_g \, 
f(u) \right)(x) \, = \, 
\]
\[
\left( Q_g \, I^{(M)}_{g(a)+} \, 
I^{(K)}_{g(a)+} \, \left(\frac{d}{du} \right)\, Q^{-1}_g \, 
f(u) \right)(x) \, = \, 
\left( Q_g \, I^{(M)}_{g(a)+} \, 
D^{(K)}_{g(a)+} \, Q^{-1}_g \, f(u) \right)(x) . 
\]
Using the second FT of GFC \cite{Luchko2023} and
\[
Q_g \, f(a) \, = \, f(a) , \quad
Q^{-1}_g \, f(a) \, = \, f(a) , 
\]
we get
\[
(I^{(M)}_{a+,g} \, D^{(K),*}_{a+,g} \, f)(x) \, = \, 
Q_g \, \Bigl( (Q^{-1}_g \, f)(x) \, - \, 
(Q^{-1}_g \, f)(a) \Bigr) \, = \, 
f(x) \, - \, f(a) ,
\]
which was to be proved. 

For the right-sided parametric GFD, the identity is proved similarly.
Q.E.D.
\end{proof}

Note that only such differential and integro-differential operators that satisfy generalized analogues of fundamental theorems can be called generalized integrals and derivatives.
In this case, these operators form a fractional calcalus. 
The parametric GFIs and GFDs satisfy 
the first fundamental Theorems \ref{FT-1-RL} and \ref{FT-1-C},
the second fundamental Theorems
\ref{FT-2-RL} and \ref{FT-2-C}.
Therefore The parametric GFIs and GFDs satisfy 
for a general fractional calculus, 
which can be called parametric GFC.

%%%%%%%%%%%%%%%%%%%%%%%%%%%%%%%%%%%%%%%%%%%%%%%%%%%%%%%%%%%%%
%%%%%%%%%%%%%%%%%%%%%%%%%%%%%%%%%%%%%%%%%%%%%%%%%%%%%%%%%%%%%
%%%%%%%%%%%%%%%%%%%%%%%%%%%%%%%%%%%%%%%%%%%%%%%%%%%%%%%%%%%%%

%%%\newpage

\section{Economic Interpretation of Parametric GF Derivatives}

Let us briefly explain a possible applications and interpretation of the parametric GF derivatives as extensions of the standard economic marginal value and marginal values with memory.

In the study of economic processes, as a rule, the marginal values are calculated for the various economic indicators, which are presented as functions of some factors 
\cite{Varian,Varian2010,Gossen-G,Gossen,BOOK-DG-2021}. 
The standard concept of marginal value includes but is not limited to the following notions: 
the marginal utility (MU); 
the marginal product (MP); 
the marginal cost (MC); 
the marginal benefit (MB);
the marginal rate of substitution (MRS);
the marginal revenue product (MRP); 
the marginal product of capital (MPK); 
the marginal product of labor (MPL); 
the marginal rate of transformation (MRT); 
the marginal propensity to consume (MPC); 
the marginal propensity to save (MPS); 
the marginal tax rate (MTR). 
The concept of marginal value allows the use of standard mathematical calculus of derivatives of integer orders to describe changes in economic processes. Usually, the marginal value is defined as the first-order derivative of the function of a certain index with respect to its determining factor. The standard marginal value shows the increase in the corresponding indicator per unit increase in its determining factor. 

To define the standard marginal value for the economic indicator $Y$, one can consider a function $Y=Y(X)$ 
describing the dependence of an indicator $Y$ on a factor
$X$.
If the given function $Y=Y(X)$ is single-valued and differentiable, 
then the marginal value (${MY}_X$) of the indicator $Y$ is defined as the first derivative of the function $Y=Y(X)$ 
with respect to $X$ as
\begin{equation} \label{EQ12-2} 
{MY}_X = \frac{dY(X)}{dX} . 
\end{equation} 

In many cases, the indicator and the factor can be considered as single-valued functions of time $t$. 
Therefore, to define the marginal values of the indicators, one can use parametric dependence of an indicator $Y$ on a factor $X$ in the form of the single-valued functions $Y=Y(t)$ and $X=X(t)$ with the parameter $t$.
Therefore, the marginal values of indicators can defined by the following form.

\begin{Definition}
Let $Y=Y(t)$ and $X=X(t)$ are single-valued differentiable functions with respect to a variable $t$, which describe the parametric dependence of the economic indicator $Y$ on some factor $X$. 
Then, the standard marginal values at time $t$ 
are defined by the equations 
\begin{equation} \label{EQ12-6} 
{MY}_X (t) \, = \, \frac{dY(t)/dt}{dX(t)/dt}, 
\end{equation} 
where $dX(t)/dt \ne 0$. 
\end{Definition}

If the dependence of $Y(t)$ on $X(t)$ can be represented as a single-valued differentiable function $Y=Y(X)$, by excluding the time parameter $t$, then equations \eqref{EQ12-2} and \eqref{EQ12-6} are equivalent by the standard chain rule
\begin{equation} \label{New-1}
\frac{dY(X(t))}{dt} \, = \, 
\left(\frac{dY(X)}{dX}\right)_{X=X(t)} \frac{dX(t)}{dt} .
\end{equation} 
Therefore if the function $X=X(t)$ is invertible in a neighborhood of the point $t$, then
\begin{equation} \label{New-2}
{MY}_X(t) \, = \, \frac{dY(t)/dt}{dX(t)/dt}=
\frac{dY(X)}{dX} \, = \, {MY}_X . 
\end{equation} 
It should be noted that standard chain rule is violated for fractional derivatives and GFDs 
\cite{CNSNS2016,Cresson2020,Math2019}.

In order for equations \eqref{New-1} and \eqref{New-2} to be satisfied, it is sufficient that the function $X = X(t)$ be reversible.
Equation \eqref{EQ12-6} gives a standard definition of the parametric derivative of the first order, 
if the function $X=X(t)$ has an inverse function in a neighborhood of $t$ and the functions $X=X(t)$ and $Y=Y(t)$ have the first derivatives. 
Equation \eqref{EQ12-6} can be considered as a parametric derivative of the indicator $Y = Y(t)$ by the factor $X = X(t)$ at time $t$, if $dX (t)/dt \ne 0$. 

Memory leads to the fact that the marginal values of an economic indicator at the time $t$ can depend on history of changes of the variables $Y(t)$ and $X(t)$ on a finite time interval. 
The standard marginal values of indicator depend only on the given time $t$ and its infinitesimal neighborhood. 
Equation \eqref{EQ12-6} cannot be used for processes with memory, because the derivatives of integer order 
are determined by the behavior of the functions 
$X =X(t)$ and $Y=Y(t)$ in an infinitely small neighborhood of the time instant $t$, which means instant forgetting (amnesia) of all the changes that were made before.
Therefore, the standard definitions of these values are applicable only on the condition that all economic agents have full amnesia. Because of this, it is necessary to generalize the concept of marginal value to take into account the effects of memory in economic processes. 
To describe the processes with memory, mathematical tools should allow us to take into account history of changes of variables $Y(t)$ and $X(t)$ on a finite time interval. 
Such a mathematical tool could be fractional calculus.

%%%%%%%%%%%%%%%%%

In 2016, new concepts that generalize the standard marginal value were proposed in \cite{Margin1,Margin2,Margin3} 
(see also works \cite{Margin4,Margin5,Interpret,BOOK-DG-2021}). 
The use of fractional calculus allows us to apply an economic analysis to describe economic processes with memory. 
The fractional calculus generalizations of standard marginal values were suggested in \cite{Margin1,Margin2,Margin3,Margin4,Margin5,BOOK-DG-2021}. 
These generalizations take into account the memory, which is described as a dependence of economic processes on changes 
of indicators and factors in the past on a finite time interval. 

%%%%%%%%%%%%%%%%%

The generalizations of the standard marginal value 
can be defined by using parametric fractional derivatives of non-integer order, which are also called the fractional derivatives of function with respect to another function. 
The Caputo parametric fractional derivative was proposed in Definition 3 of paper \cite[p.~224]{Elasticity1}, and then the properties of this derivative were described in \cite{Almeida-1}. 

Let $X(\tau)$ be an increasing positive monotonic function having a continuous derivative $X^{(1)}(t)$. 
Then the Caputo parametric fractional derivative of the order $\alpha \ge 0$ is defined by the equation
\begin{dmath} \label{EQ12C-1}
\left(D^{\alpha}_{X(t)} Y\right)(t) = 
\frac{1}{\Gamma (n -\alpha)} \int^t_0 d\tau 
\frac{X^{(1)}(\tau)}{(X(t)-X(\tau))^{\alpha +1-n}} 
\left( \frac{1}{X^{(1)}(\tau)}
\frac{d}{d\tau}\right)^n Y (\tau) , 
\end{dmath} 
where $n - 1 < \alpha \le n$ and $0<\tau <t$.

If $X(t)=t$, then equation \eqref{EQ12C-1} gives the well-known Caputo fractional derivative 
\begin{equation}
\left(D^{\alpha}_{X(t)} Y \right)(t) =
\left(D^{\alpha}_{C;0+} Y\right) (t) .
\end{equation} 
For the parametric Caputo derivative \eqref{EQ12C-1} of the function $Y(t)=(X(t)-X(0))^{\beta}$, we have the equation
\begin{equation} \label{EQ12C-2} 
\left(D^{\alpha}_{X(t)} Y\right)(t) = 
\frac{\Gamma (\beta +1)}{\Gamma (\beta -\alpha +1)}
(X(t)-X(0))^{\beta -\alpha} , 
\end{equation} 
where $\beta -\alpha >0$.

The generalized marginal values was defined by the using the parametric Caputo fractional derivative \cite{BOOK-DG-2021}.
If $Y(t)$ can be represented as a single-valued function of $X(t)$, then we can define the marginal value with memory
by using the parametric Caputo fractional derivatives
\eqref{EQ12C-1}, which is the fractional derivatives of the function $Y(t)$ with respect to the function $X(t)$. 

\begin{Definition}
Let $Y(t)$ be a function that can be represented as a single-valued function of $X(t)$, which is an increasing positive monotonic function having a continuous derivative $X^{(1)}(t) \ne 0$. Then, the marginal value of the order $\alpha \ge 0$ can be defined by the equation
\begin{equation} \label{EQ12C-12}
MY_X(\alpha ,t) \, = \, 
\left(D^{\alpha}_{X(t)} Y\right)(t) , 
\end{equation} 
where $D^{\alpha}_{X(t)}$ is the Caputo parametric derivative \eqref{EQ12C-1}.
\end{Definition}

In the case $Y(t) = X(t)$ and $Y(t)=X(t)-X(0)$, using equation \eqref{EQ12C-2} for expressions \eqref{EQ12C-12} and $\left(D^{\alpha}_{X(t)} X(0)\right)(t)=0$, equation \eqref{EQ12C-12} gives
\begin{equation} \label{EQ12C-15-1} 
MX_X(\alpha ,t) = \frac{1}{\Gamma (2-\alpha)}
(X(t)-X(0))^{1-\alpha} . 
\end{equation} 
In order to the marginal value of non-integer order gives one at $Y(t)=X(t)$, i.e., $MX_X(\alpha,t)=1$, we can define the marginal value of fractional order by the equation
\begin{equation} \label{EQ12C-15-2} 
MY^{*}_X(\alpha ,t) \, = \, 
\frac{\left(D^{\alpha}_{X(t)} Y\right)(t)}{\left(D^{\alpha}_{X(t)} X \right)(t)} 
\end{equation} 
instead of equation \eqref{EQ12C-12}.

Using the GF derivative, one can defined the GF marginal value by the equations
\begin{equation} \label{EQ12C-15GF1} 
MY_X(K,t) =
\left(D^{(K)}_{a+,X(t)} Y\right)(t) ,
\end{equation} 
\begin{equation} \label{EQ12C-15GF2} 
MY^{*}_X(K,t) =
\frac{\left(D^{(K)}_{a+,X(t)} Y\right)(t)}{\left(D^{(K)}_{a+,X(t)} X \right)(t)} ,
\end{equation} 
where $t \, > \, a$.

For the kernel $K(t) = h_{1-\alpha}(t)$, equations
\eqref{EQ12C-15GF1} and \eqref{EQ12C-15GF2} give
\eqref{EQ12C-15-1} and \eqref{EQ12C-15-2}, i.e.
\begin{equation} 
MY_X(h_{1-\alpha},t) \, = \, MY_X(\alpha,t) ,
\qquad
MY^{*}_X(h_{1-\alpha},t) \, = \, MY^{*}_X(\alpha,t) .
\end{equation}

Therefore equations \eqref{EQ12C-15GF1} and \eqref{EQ12C-15GF2} give a possible interpretations of the parametric GFDs in economics \cite{BOOK-DG-2021}.
The economic interpretation of the parametric GFDs is based on a generalization of marginal values of economic indicators, which take into account non-locality in time and memory.
The parametric GFDs are interpreted as economic characteristics that are intermediate between the standard average and marginal values of indicators 
\cite{TT-1,TT-2,Rehman2018,BOOK-DG-2021}.

The economic interpretation of the parametric GFDs can be also based on a general fractional extension of the concept of fractional elasticity, proposed \cite{Elasticity1} to take into account memory in economic dynamics. 
The GF elasticity can be defined as 
\begin{equation} \label{EQ12C-15GF-E} 
E_{{(K)}}(Y(t);X(t)) =
\left(D^{(K)}_{a+, \ln X(t)} \ln Y \right)(t) ,
\end{equation} 
that can be considered of GF extension of the fractional Log-elasticity proposed in Definition 3 of \cite{Elasticity1} and 
Section 5 of \cite{BOOK-DG-2021}, pp.101-121.
Equation \eqref{EQ12C-15GF-E} defines an elasticity with memory that is described by operators with kernels of Sonin type insted of FDs with power-law kernels 
\cite{Elasticity1,BOOK-DG-2021}.

%%%%%%%%%%%%%%%%%%%%%%%%%%%%%%%%%%%%%%%%%%%%%%%%%%%%%%%%%%%%%
%%%%%%%%%%%%%%%%%%%%%%%%%%%%%%%%%%%%%%%%%%%%%%%%%%%%%%%%%%%%%
%%%%%%%%%%%%%%%%%%%%%%%%%%%%%%%%%%%%%%%%%%%%%%%%%%%%%%%%%%%%%

\section*{Statements and Declarations}

\ \ \ \
Funding: This research received no external funding. \\

Institutional Review Board Statement: Not applicable.\\

Informed Consent Statement: Not applicable.\\

Data Availability Statement: Not applicable.\\

Conflicts of Interest: The author declare no conflict of interest.

%%%%%%%%%%%%%%%%%%%%%%%%%%%%%%%%%%%%%%%%%%%%%%%%%%%%%%%%%%%%%
%%%%%%%%%%%%%%%%%%%%%%%%%%%%%%%%%%%%%%%%%%%%%%%%%%%%%%%%%%%%%
%%%%%%%%%%%%%%%%%%%%%%%%%%%%%%%%%%%%%%%%%%%%%%%%%%%%%%%%%%%%%

%%%\newpage

%%%%%%%%%%%%%%%%%%%%%%%%%%%%%%%%%%%%%%%%%%%%%%%%%%%%%%%%%%%%%%
%%%%%%%%%%%%%%%%%%%%%%%%%%%%%%%%%%%%%%%%%%%%%%%%%%%%%%%%%%%%%%
%%%%%%%%%%%%%%%%%%%%%%%%%%%%%%%%%%%%%%%%%%%%%%%%%%%%%%%%%%%%%%

\end{document}